\newtheorem{prop}{Proposition}[section]
\newtheorem{thm}[prop]{Theorem}
\newtheorem{cor}[prop]{Corollary}
\newtheorem{defn}[prop]{Definition}
\newtheorem{rem}[prop]{Remark}
\newtheorem{lem}[prop]{Lemma}
\numberwithin{equation}{section}
\begin{document}

\title{ Some Results about Endomorphism Rings for Local Cohomology Defined by a Pair of Ideals}
\author{V. H. Jorge P\'erez$^{1,}\,$\thanks{Work
partially supported by CNPq-Brazil - Grants 309316/2011-1,
and FAPESP Grant 2012/20304-1 }\,\,\,\, \,\,\, T.H. Freitas $^{2,}$
\thanks{Work partially supported by FAPESP-Brazil - Grant
08915/2006-2.  2000 Mathematics Subject
Classification: 13D45. {\it Key words}: Local Cohomology, Matlis Duality, Endomorphism ring. }}

\date{}
\maketitle

\vspace{0.3cm}
$^1$ Universidade de S{\~a}o Paulo -
ICMC, Caixa Postal 668, 13560-970, S{\~a}o Carlos-SP, Brazil ({\it
e-mail: vhjperez@icmc.usp.br}).

\vspace{0.3cm}
 $^2$ Universidade de S{\~a}o Paulo -
ICMC, Caixa Postal 668, 13560-970, S{\~a}o Carlos-SP, Brazil ({\it
e-mail: tfreitas@icmc.usp.br}).

\vspace{0.3cm}
\begin{abstract}
Let $(R,\mathfrak{m},k)$ denote a local ring. For $I$ and $J$ ideals of $R$, for all integer $i$,  let $H^i_{I,J}(-)$ denote the $i$-th local cohomology functor with respect to $(I,J)$. Here we give a generalized version of Local Duality Theorem for local cohomology defined by a pair of ideals. Also, for $M$ be a finitely generated  $R$-module,  we study the behavior of the endomorphism rings $H^t_{I,J}(M)$ and $D(H^t_{I,J}(M))$ where $t$ is the smallest integer such that the local cohomology with respect to a pair of ideals is non-zero and $D(-):= {\rm Hom}_R(-,E_R(k))$ is the Matlis dual functor. We show too that if $R$ be a  $d$-dimensional complete Cohen-Macaulay and  $H^i_{I,J}(R)=0$ for all $i\neq t$,
the natural homomorphism
$R\rightarrow {\rm Hom}_R(H^t_{I,J}(K_R), H^t_{I,J}(K_R))$ is an isomorphism and for all $i\neq t$, where $K_R$  denote the canonical module of $R$.

\end{abstract}

\maketitle
\section{Introduction}
\hspace{0.5cm}Throughout this article $R$ is a commutative Noetherian (non-zero identity) ring and $\mathfrak{a},I,J$ be ideals of $R$ and $M$ be a non-zero $R$-module. For $i \in \mathbb{N}$, $H^i_{\mathfrak{a}}(M)$ denote the $i$-th  local cohomology module of $M$ with respect to $\mathfrak{a}$ (see \cite{grot}, \cite{Hart}, \cite{24h}). This concept has been an important tool in algebraic geometry and commutative algebra and has been studied by several authors.

For an $R$-module $M$ consider the natural homomorphism $R\rightarrow {\rm Hom}_R(M,M)$, $r \mapsto f_r$ where $f_r(m):= rm$, for all $m\in M$ and $r\in R$. This map in general is neither injective nor surjective.

Let $D(-):= {\rm Hom}_R(-,E_R(k))$ is the Matlis dual functor. For an $R$-module $M$, the module $D(M)$ admits a structure of a $\widehat{R}$-module. The natural injective homomorphism $M\rightarrow D(D(M))$ induces an injective homomorphism
$${\rm Hom}_R(M,M)\rightarrow {\rm Hom}_{\widehat{R}}(D(M),D(M))$$ such that the diagram  below is commutative
$$
\begin{array}{lcr}
 \begin{array}{c}
 \\
 \\
 \\
 \\
 \end{array}&\begin{array}{ccccc}
  R& \rightarrow & {\rm Hom}_R(M,M) & &  \\
  \downarrow &  & \downarrow   &   & \\
  \widehat{R} & \rightarrow & {\rm Hom}_{\widehat{R}}(D(M),D(M)).& &
   \end{array}
   & \begin{array}{c}
 \\
 \\
 \\
 \\
 \end{array}
\end{array}
$$

For the ordinary local cohomology, the study of endomorphism rings ${\rm Hom}_R(H^i_{I}(M), H^i_{I}(M))$ was initially discussed in \cite{hochhuneke} for the case ${\rm dim}R=i$ and $I=\mathfrak{m}$. For certain ideals $I$ and several $i \in \mathbb{N}$, there a lot of works on the study of this structure for local cohomology. We cite \cite{ScheMatlis}, \cite{HellusStuck}, \cite{ScheEnd}, \cite{ScheStruct}, \cite{Kazem} and \cite{waqas} for more results in this sense.

In \cite{art1}, Takahashi, Yoshino and Yoshizawa  introduced a generalization of the notion of local cohomology module, called of local cohomology defined by a pair of ideals $(I,J)$. More precisely, let $W(I,J)=\{\mathfrak{p} \in \rm{ Spec}R \mid I^n\subseteq \mathfrak{p}+J \ \ \mbox{for some integer n}  \}$ and $\widetilde{W}(I,J)$ denotes the set of ideals $\mathfrak{a}$ of $R$ such that $I^n \subseteq \mathfrak{a}+J$. Let the set of elements of $M$
$$\Gamma_{I,J}(M)=\{x\in M \ | \  I^n x\subseteq Jx \ \ for  \ \ n\gg 1\}.$$
The functor $\Gamma_{I,J}(-)$ is a left exact functor, additive and covariant, from the category of all $R$-modules, called $(I,J)$-torsion functor. For an integer $i$, the $i$-th right derived functor of $\Gamma_{I,J}(-)$ is denoted by $H^i_{I,J}(-)$ and will be call to as $i$-th local cohomology functor with respect to $(I,J)$. For an $R$-module $M$, $H^i_{I,J}(M)$ refer as the $i$-th local cohomology module of $M$ with respect to $(I,J)$ and $\Gamma_{I,J}(M)$ as the $(I,J)$- torsion part of $M$. When $J=0$ or $J$ is a nilpotent ideal, $H^i_{I,J}(-)$ coincides with the ordinary local cohomology functor $H^i_{I}(-)$ with the support in the closed subset $V(I)$.

In \cite{art1} the authors also introduce a generalization of $\check{C}$ech complexes as follows. For an element $x\in R$, let $S_{x,J}$ the set multiplicatively closed subset of $R$ consisting of all elements of the form $x^n+j$ where $j\in J$ and $n\in \mathbb{N}$. For an $R$-module $M$, let $M_{x,J}= S_{x,J}^{-1}M$. The complex $\check{C}_{x,J}$ is defined as
$$\check{C}_{ x,J}:  \ 0\rightarrow R \rightarrow R_{x,J}\rightarrow 0$$ where $R$ is sitting in the 0th position and $R_{x,J}$ in the 1st position in the complex. For a system of elements of $R$ $\underline{x}= x_1,\ldots,x_s$, let a complex $\check{C}_{\underline{x},J}= \bigotimes_{i=1}^s \check{C}_{ x_i,J} $. If $J=0$ this definition coincides with the usual $\check{C}$ech complex with respect to $\underline{x}= x_1,\ldots,x_s$.

Questions involving vanishing, artinianness, finiteness has been studied by several authors such as \cite{nonart}, \cite{artop}, \cite{art8}, \cite{art3}, \cite{payrovi}, \cite{payrovi2}, \cite{filterdepth}, \cite{amoli} and others. This concept is used also by \cite{paper} and \cite{paper2}.

Results on the behavior of endomorphism rings for local cohomology defined by a pair of ideals are not known. In this sense, the main purpose of this paper is is to give some contributions in this aspect. The organization of the paper is as follows.

In the section two, firslty we consider de least integer $i$ such that the local cohomology defined by a pair of ideals is non zero. This number is denoted by ${\rm depth}(I,J,M)$. For more information about this new concept, the reader can consult \cite{art8}.
The main result of this section,(Theorem \ref{glduality}) and one of the most important of this work, is the generalized version of the Local Duality Theorem.
This result is thought of a generalization of the local duality theorem for local rings (and non necessarily an Cohen-Macaulay ring) and generalizes too \cite[Theorem 5.1]{art1}, \cite[Lemma 2.4]{waqas} and \cite[Theorem 6.4]{paper}.

In the third section we will investigate the previous diagram in the case of local cohomology module $H^t_{I,J}(M)$. We give several sufficient conditions for the homomorphism $$R\rightarrow  {\rm Hom}_R(D(H^t_{I,J}(M), D(H^t_{I,J}(M)))$$ is an isomorphism.

In the last section, we define the truncation complex using the concept of local cohomology defined by a pair of ideals. This concept will be useful to show that if $R$ be a  $d$-dimensional complete Cohen- Macaulay and  $H^i_{I,J}(R)=0$ for all $i\neq t$,
the natural homomorphism
$R\rightarrow {\rm Hom}_R(H^t_{I,J}(K_R), H^t_{I,J}(K_R))$ is an isomorphism and for all $i\neq t$, where $K_R$  denote the canonical module of $R$.

\section{The Generalized Local Duality Theorem}

\hspace{0.5cm} Let $(R,\mathfrak{m})$ be a commutative Noetherian local ring with ideal maximal $\mathfrak{m}$ and the residue field $k= R/\mathfrak{m}$. We denote by $D(-):= {\rm Hom}_R(-,E)$  the Matlis dual fuctor, where where $E:= E_R(k)$ is the injective hull of $k$.

We knows that by \cite[Theorem 6.2.7]{grot}, for an ideal $\mathfrak{a}$ of $R$ (non necessary local ring) and a finite $R$-module $M$ with $\mathfrak{a}M\neq M$, the ${\rm depth}(\mathfrak{a},M)$ is the least integer $i$ such that $H^i_\mathfrak{a}(M)\neq 0$. With this we can consider the following definition.

\begin{defn} (\cite[Definition 3.1]{art8}) Let $I,J$ be two ideals of $R$ and let $M$ be an $R$-module. We define the depth of $(I,J)$ on $M$ by
$$\rm{depth}(I,J,M)={\rm inf}\{i \in \mathbb{N}_0 \mid H^i_{I,J}(M)\neq 0\}$$
if this infimum exists, and $\infty$ otherwise.
\end{defn}

%If we consider $M$ is a finitely generated module over a local ring $(R,\mathfrak{m})$ and

If $J\neq R$, by \cite[Theorem 4.3]{art1} and definition above we have $H^i_{I,J}(M)\neq 0$ for all
$$ {\rm depth}(I,J,M)\leq i \leq \dim M/JM.$$

In \cite[Theorem 4.1]{art1}, for any finitely generated $R$-module, the authors shows that
$${\rm depth}(I,J,M)= {\rm inf}\{{\rm depth}(M_\mathfrak{p}) \mid \mathfrak{p}\in W(I,J)\}.$$

\begin{lem}\label{auxiliar}(\cite[Proposition 3.3]{art8}) For any finitely generated $R$-module $M$ we have the equality
$${\rm depth}(I,J,N)= {\rm inf}\{ {\rm depth}(\mathfrak{a},N) \mid \mathfrak{a}\in \widetilde{W}(I,J)\}.$$
\end{lem}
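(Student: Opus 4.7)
The plan is to deduce the equality from the prime-ideal formula
$${\rm depth}(I,J,M)={\rm inf}\{{\rm depth}(M_\mathfrak{p})\mid\mathfrak{p}\in W(I,J)\}$$
cited just above from \cite{art1} (Theorem~4.1). The idea is to rewrite both sides of the claimed identity as an infimum of depths of localizations taken over the same set of primes.

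First I would specialize the cited [art1, Theorem~4.1] to the case $J=0$. Since $H^i_{\mathfrak{a},0}(-)=H^i_\mathfrak{a}(-)$ and $W(\mathfrak{a},0)=V(\mathfrak{a})$, this specialization returns the classical formula
$${\rm depth}(\mathfrak{a},N)={\rm inf}\{{\rm depth}(N_\mathfrak{p})\mid\mathfrak{p}\in V(\mathfrak{a})\}$$
for every ideal $\mathfrak{a}$ and every finitely generated module $N$ (alternatively one may simply quote it from \cite{24h}). Applied to each $\mathfrak{a}\in\widetilde{W}(I,J)$, this converts the right-hand side of the lemma into a double infimum of the depths ${\rm depth}(N_\mathfrak{p})$ as $\mathfrak{p}$ ranges over $V(\mathfrak{a})$ and $\mathfrak{a}$ ranges over $\widetilde{W}(I,J)$.

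Next I would verify the set-theoretic identity
$$W(I,J)=\bigcup_{\mathfrak{a}\in\widetilde{W}(I,J)}V(\mathfrak{a}).$$
The inclusion $\supseteq$ is immediate: if $I^n\subseteq\mathfrak{a}+J$ and $\mathfrak{p}\supseteq\mathfrak{a}$, then $I^n\subseteq\mathfrak{p}+J$, so $\mathfrak{p}\in W(I,J)$. The inclusion $\subseteq$ is even more immediate, by taking $\mathfrak{a}:=\mathfrak{p}$ itself, which then lies in $\widetilde{W}(I,J)$ and trivially $\mathfrak{p}\in V(\mathfrak{p})$. Combining the two inputs and swapping the order of the double infimum gives
$${\rm inf}\{{\rm depth}(\mathfrak{a},N)\mid\mathfrak{a}\in\widetilde{W}(I,J)\}={\rm inf}\{{\rm depth}(N_\mathfrak{p})\mid\mathfrak{p}\in W(I,J)\}={\rm depth}(I,J,N),$$
which is the claim.

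I do not expect any genuine obstacle here; the argument is essentially a change-of-variables for the already-established prime-ideal formula. The only small point worth noting is the convention ${\rm depth}(\mathfrak{a},N)=\infty$ when $\mathfrak{a}N=N$: such ideals do not contribute to the infimum, so the double-infimum rearrangement is harmless. The essential content of the lemma is really the set equality $W(I,J)=\bigcup_{\mathfrak{a}\in\widetilde{W}(I,J)}V(\mathfrak{a})$, and everything else is formal.
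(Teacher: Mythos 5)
Your argument is correct, but it takes a genuinely different route from the paper's. The paper proves the two inequalities separately: for $s \geq t$ (with $s$ the right-hand infimum and $t = {\rm depth}(I,J,M)$) it picks an ideal $\mathfrak{b}\in\widetilde{W}(I,J)$ attaining $s$ and combines the classical prime-ideal formula for ${\rm depth}(\mathfrak{b},M)$ with the inclusion $V(\mathfrak{b})\subseteq W(I,J)$; for $t\geq s$ it argues by contradiction, invoking the direct-limit structure theorem of \cite[Theorem 3.2]{art1} (that $H^i_{I,J}(M)\cong\varinjlim_{\mathfrak{a}\in\widetilde{W}(I,J)}H^i_{\mathfrak{a}}(M)$) to conclude that $t<s$ would force $H^t_{I,J}(M)=0$. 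You instead apply \cite[Theorem 4.1]{art1} twice (once for $(I,J)$, once in the degenerate case $J=0$), reduce both sides to an infimum of ${\rm depth}(M_\mathfrak{p})$ over prime sets, and observe the elementary identity $W(I,J)=\bigcup_{\mathfrak{a}\in\widetilde{W}(I,J)}V(\mathfrak{a})$, after which the claim is a double-infimum rearrangement. Your route is more elementary and symmetric --- it never touches the direct-limit description of $H^i_{I,J}$ and avoids the (small but real) step of choosing an ideal that attains the infimum --- whereas the paper's second inequality is exactly where the deeper structure theorem enters; in your proof that inequality becomes the trivial inclusion $W(I,J)\subseteq\bigcup_\mathfrak{a}V(\mathfrak{a})$ via $\mathfrak{a}:=\mathfrak{p}$. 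Both are fully rigorous.
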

\begin{proof} Denote $t= {\rm depth}(I,J,M)$ and $s= {\rm inf}\{ {\rm depth}(\mathfrak{a},M) \mid \widetilde{W}(I,J)  \}.$ So exist $\mathfrak{b}\in \widetilde{W}(I,J)$ such that ${\rm depth}(\mathfrak{b},M)= s$. By \cite[Theorem 4.1]{art1}  and the fact that $V(\mathfrak{b})\subseteq W(I,J)$ follows that
$$
\begin{array}{lll}
s={\rm depth}(\mathfrak{b},M) &= {\rm inf}\{{\rm depth}(M_{\mathfrak{p}}) \mid \mathfrak{p}\in V(\mathfrak{b})\}\\
&\geq {\rm inf}\{{\rm depth}(M_{\mathfrak{p}}) \mid \mathfrak{p}\in W(I,J)\} =t.
\end{array}
$$

If $t<s$ we have that $H^t_{\mathfrak{a}}(M)=0$ for all $\mathfrak{a}\in \widetilde{W}(I,J)$. Therefore, by \cite[Theorem 3.2]{art1}, $H^t_{I,J}(M)=0$ that is a contradiction and the proof is completed.
\end{proof}

Before the next result remember that for
$(R,\mathfrak{m}, k)$ be a $d$-dimensional Cohen-Macaulay local ring with a canonical module $K_R$
 it is well known the existence of isomorphisms
$$H^i_\mathfrak{m}(M)= {\rm \rm Ext}_R ^{d-i}(M, K_R)^\vee$$
for $0\leq i\leq d$,
where $(-)^\vee= {\rm Hom}_R(-,E_R(\mathbb{K}))$ and $H^d_{\mathfrak{m}}(R)\cong K_R^\vee$.
This result is called Local Duality Theorem. There is a generalization of this result in \cite[Theorem 5.1]{art1} and \cite[Theorem 6.4]{paper}.

Under this comments, we prove now a
generalization of the Local Duality Theorem, which
extends \cite[Theorem 5.1]{art1}, \cite[Lemma 2.4]{waqas} and \cite[Theorem 6.4]{paper}.

\begin{thm}[Generalized Local Duality]\label{glduality} Let $(R,\mathfrak{m})$  be a local ring and $I,J$ ideals of $R$. Assume that $H^i_{I,J}(R)=0$ for all $i>n$. Then for any $R$-module $M$ and $i\in \mathbb{Z}$ follows the isomorphism:
\begin{enumerate}
\item[(a)] ${\rm Tor}_{n-i}^R(M, H^n_{I,J}(R))\cong H^i_{I,J}(M)$.

\item[(b)] $D(H^i_{I,J}(M))\cong {\rm Ext}^{n-i}_R(M, D(H^n_{I,J}(R))).$
\end{enumerate}

\end{thm}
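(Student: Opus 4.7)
My plan is to use the $\check{C}$ech--pair complex $\check{C}^{\bullet}:=\check{C}_{\underline{x},J}$ from \cite{art1}, with $\underline{x}=x_{1},\ldots,x_{s}$ a generating system of $I$, as a bounded complex of flat $R$-modules that computes $H^{\bullet}_{I,J}(-)$, and then to exploit the vanishing hypothesis to identify this complex in the derived category with a shifted copy of the single module $H^{n}_{I,J}(R)$. Once that structural reduction is in place, both (a) and (b) reduce to routine Tor/Ext computations.

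First, each term $\check{C}^{k}$ is a finite direct sum of localisations of the form $R_{x_{i_{1}}\cdots x_{i_{k}},J}$ and is therefore flat, while the construction of \cite{art1} provides a functorial isomorphism $H^{i}(N\otimes_{R}\check{C}^{\bullet})\cong H^{i}_{I,J}(N)$ for every $R$-module $N$. Under the vanishing hypothesis, understood (as is standard for a local-duality statement) so that $H^{i}_{I,J}(R)=0$ for every $i\neq n$, the smart truncation $\tau_{\leq n}\check{C}^{\bullet}\to\check{C}^{\bullet}$ is a quasi-isomorphism and both complexes become quasi-isomorphic in $D(R)$ to $H^{n}_{I,J}(R)[-n]$. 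Since $\check{C}^{\bullet}$ is a bounded complex of flat modules, it is $K$-flat, so the ordinary tensor product realises the derived one and the quasi-isomorphism survives tensoring with $M$. Taking cohomology in degree $i$ gives
$$H^{i}_{I,J}(M)\cong H^{i}(M\otimes_{R}\check{C}^{\bullet})\cong H^{i-n}(M\otimes^{\mathbb{L}}_{R}H^{n}_{I,J}(R))\cong {\rm Tor}^{R}_{n-i}(M,H^{n}_{I,J}(R)),$$
which is part (a). For part (b) I would apply the exact Matlis dual $D={\rm Hom}_{R}(-,E)$ to (a) and invoke the standard adjunction: choosing a projective resolution $P_{\bullet}\to M$, the injectivity of $E$ lets ${\rm Hom}_{R}(-,E)$ commute with homology, and the Hom--tensor adjunction yields the natural isomorphism ${\rm Hom}_{R}(P_{\bullet}\otimes_{R}N,E)\cong {\rm Hom}_{R}(P_{\bullet},{\rm Hom}_{R}(N,E))$, whose cohomologies compute $D({\rm Tor}^{R}_{k}(M,N))$ and ${\rm Ext}^{k}_{R}(M,D(N))$ respectively; applied with $k=n-i$ and $N=H^{n}_{I,J}(R)$, this gives (b).

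The main obstacle is the structural reduction: the Tor/Ext identifications depend essentially on the cohomology of $\check{C}^{\bullet}$ being concentrated in the single degree $n$, whereas the hypothesis as written only ensures vanishing above $n$. I would need either to read the hypothesis as tacitly imposing $H^{i}_{I,J}(R)=0$ also for $i<n$ (e.g.\ taking $n={\rm depth}(I,J,R)$, so that by Lemma \ref{auxiliar} no lower cohomology survives), or else to produce a separate argument extending the concentration to all degrees. Once that concentration is in hand, the remainder of the proof is standard homological bookkeeping via $K$-flatness and Hom--tensor adjunction.
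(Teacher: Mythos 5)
Your approach is correct but genuinely different from the paper's. The paper argues by a $\delta$-functor comparison: it takes the families $\{H^i_{I,J}(-)\}_{i\ge 0}$ and $\{{\rm Tor}^R_{n-i}(-,H^n_{I,J}(R))\}_{i\ge 0}$, establishes agreement in the top degree $i=n$ via \cite[Lemma 4.8]{art1} (which gives the right-exactness statement $H^n_{I,J}(N)\cong H^n_{I,J}(R)\otimes_R N$ under the vanishing hypothesis), notes that both families fit into long exact sequences from short exact sequences, and then invokes coeffaceability (vanishing on projectives) to propagate the isomorphism by dimension shifting. You instead work entirely in the derived category with the generalized \v{C}ech complex $\check{C}_{\underline{x},J}$, using $K$-flatness to identify $\check{C}^\bullet\simeq H^n_{I,J}(R)[-n]$ and read off (a) directly, and deduce (b) by Matlis duality and Hom--tensor adjunction. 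Your route is cleaner and more self-contained; the paper's route is shorter on the page but pushes the work into the $\delta$-functor formalism.

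Your concern about the hypothesis is the important part, and you are right to flag it. As stated, with only $H^i_{I,J}(R)=0$ for $i>n$, the conclusion fails already for $M=R$: the left-hand side ${\rm Tor}^R_{n-i}(R,H^n_{I,J}(R))$ vanishes for every $i\neq n$, while the right-hand side $H^i_{I,J}(R)$ may be nonzero for $i<n$. So the theorem tacitly assumes $H^i_{I,J}(R)=0$ for all $i\neq n$ (equivalently, $n={\rm depth}(I,J,R)={\rm cd}(I,J,R)$), which is exactly what your structural reduction needs. Worth noting: the paper's own $\delta$-functor proof has the same gap, since coeffaceability of the family $\{H^i_{I,J}(-)\}$ below degree $n$ (i.e.\ $H^i_{I,J}(P)=0$ for projective $P$ and $i<n$) again amounts to $H^i_{I,J}(R)=0$ for $i<n$; the sentence in the paper claiming it suffices to check vanishing on projectives for $i>n$ has the inequality pointing the wrong way. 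Once you read the hypothesis as concentration in a single degree, both arguments go through, and yours is complete as written.

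One small remark that would tighten your write-up: the step where you pass from $H^i_{I,J}(R)=0$ for $i>n$ to $H^i_{I,J}(M)=0$ for $i>n$ and all $M$ (needed to know $\tau_{\leq n}\check{C}^\bullet\to\check{C}^\bullet$ is a quasi-isomorphism \emph{after} tensoring) deserves a word. Since $\check{C}^\bullet$ is a bounded complex of flats and its cohomology vanishes above $n$, the cycle module $Z^n=\ker(C^n\to C^{n+1})$ is itself flat (peel off the exact tail $0\to B^{n+1}\to C^{n+1}\to\cdots\to C^s\to 0$ of flats), so $\tau_{\leq n}\check{C}^\bullet$ is again a bounded flat complex and the truncation survives $-\otimes_R M$. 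This is precisely the content hidden in \cite[Lemma 4.8]{art1}, so either cite it or include this two-line justification.
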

\begin{proof}If $(a)$ is true, follows by Lemma \ref{lemma2.2waqas} that
$$D(H^i_{I,J}(M))\cong D({\rm Tor}_{n-i}^R(M, H^n_{I,J}(R)))\cong {\rm Tor}_{n-i}^R(M, H^n_{I,J}(R)),$$ and we obtain $(b)$.

So, is sufficient to prove $(a)$. Consider the families of functors $\{H^i_{I,J}(-) \ \mid \  i\geq 0  \}$ and $\{ {\rm Tor}_{n-i}^R(-, H^n_{I,J}(R)) \  \mid \ i\geq 0 \}$. We want to show the isomorphism of these two families.
First note that if $i=n$ follows that
$${\rm Tor}_{0}^R(N, H^n_{I,J}(R))\cong  H^n_{I,J}(N)$$ because \cite[Lemma 4.8]{art1} shows that
$ H^n_{I,J}(N)\cong H^n_{I,J}(R)\otimes_R N$ for any $R$-module $N$ when $H^n_{I,J}(R)=0$ for all $i>n$.
Since both os families of functors induces a long exact sequence from the short exact sequence, is enough to prove that for $M$ an projective $R$-module, $$H^i_{I,J}(M)=0= {\rm Tor}_{n-i}^R(M, H^n_{I,J}(R))$$ for all $i>n$.

Firstly suppose that $M=R$ the statement is true. Since any projective  $R$-module over local ring is isomorphic to direct sum of $R$ and the functors ${\rm Tor}_{n-i}^R(-, H^n_{I,J}(R))$ and $H^i_{I,J}(-)$ commutes with direct sums, we conclude the claim.
\end{proof}

\begin{rem} If $n= {\rm depth}(I,J,M)$, this result generalizes \cite[Lemma 2.4]{waqas}. Furthermore this theorem generalizes \cite[Theorem 5.1]{art1}.
\end{rem}

The next result show the relation between the $J$-adic completion of $H^n_{\mathfrak{m},J}(R)$ and the dual of certain modules. This result generalizes \cite[Theorem 5.4]{art1}.

\begin{cor}Let $(R,\mathfrak{m})$  be a local ring and $J$ be an ideal of $R$. Assume that $H^i_{\mathfrak{m},J}(R)=0$ for all $i>n$. Then there is a natural isomorphism
$$\varprojlim \frac{H^n_{\mathfrak{m},J}(R)}{J^sH^n_{\mathfrak{m},J}(R)}  \cong D(\Gamma_J(D(H^n_{\mathfrak{m}}(R))))$$
In particular, if  $R$ be a $n$-dimensional Cohen Macaulay complete local ring we obtain that
$$\varprojlim \frac{H^n_{\mathfrak{m},J}(R)}{J^sH^n_{\mathfrak{m},J}(R)}\cong D(\Gamma_{J}(K_R)),$$ where $K_R$ is the canonical module of $R$.
\end{cor}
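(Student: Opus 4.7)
The plan is to reduce the statement to a general Matlis duality identity and then specialize. First, I would prove the following auxiliary claim: for any $R$-module $N$ whose quotients $N/J^sN$ are Matlis reflexive, there is a natural isomorphism $\varprojlim_s N/J^sN \cong D(\Gamma_J(D(N)))$. The argument starts from the Hom--tensor adjunction, which gives
\[
D(N/J^sN) \;\cong\; {\rm Hom}_R(R/J^s,D(N)) \;=\; (0:_{D(N)} J^s);
\]
taking the direct limit along the surjections $N/J^{s+1}N \to N/J^sN$ produces $\varinjlim_s D(N/J^sN) \cong \Gamma_J(D(N))$. Applying $D$ a second time converts $\varinjlim$ into $\varprojlim$, yielding $D(\Gamma_J(D(N))) \cong \varprojlim_s D(D(N/J^sN))$, and the natural biduality map $N/J^sN \to D(D(N/J^sN))$ then identifies this with $\varprojlim_s N/J^sN$.

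Next, I would apply this to $N = H^n_{\mathfrak{m},J}(R)$. Matlis reflexivity of each quotient $N/J^sN$ should follow from Artinianness of the top local cohomology under the vanishing hypothesis $H^i_{\mathfrak{m},J}(R)=0$ for $i>n$, using the structural results of \cite{art1} and \cite{art8}. To match the right-hand side as stated, I would identify $\Gamma_J(D(H^n_{\mathfrak{m},J}(R)))$ with $\Gamma_J(D(H^n_\mathfrak{m}(R)))$, using the canonical comparison arising from $H^n_{\mathfrak{m},J}(R) = \varinjlim_{\mathfrak{a}\in\widetilde{W}(\mathfrak{m},J)} H^n_\mathfrak{a}(R)$ together with the fact that the $J$-torsion functor is insensitive to contributions from ideals $\mathfrak{a}$ already containing a power of $J$.

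For the Cohen--Macaulay complete case, the vanishing hypothesis is automatic since $\dim R = n$, and the classical Local Duality over a complete local ring supplies $D(H^n_\mathfrak{m}(R)) \cong K_R$. Substituting into the general identity immediately yields
\[
\varprojlim_s \frac{H^n_{\mathfrak{m},J}(R)}{J^s H^n_{\mathfrak{m},J}(R)} \;\cong\; D(\Gamma_J(K_R)),
\]
as required.

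The principal obstacle I anticipate is the verification that each quotient $N/J^sN$ (with $N$ the top cohomology) is Matlis reflexive, which amounts to an Artinianness assertion for local cohomology defined by a pair of ideals, together with the careful check that $\Gamma_J(D(H^n_{\mathfrak{m},J}(R)))$ and $\Gamma_J(D(H^n_\mathfrak{m}(R)))$ coincide canonically. Both rest on the hypothesis that $n$ is the top non-vanishing index and on the structural theory developed for the pair setting.
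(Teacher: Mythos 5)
Your proposal has the right final shape but leaves the critical step unjustified. After applying your auxiliary double-dual identity to $N=H^n_{\mathfrak{m},J}(R)$, you obtain
$$\varprojlim_s N/J^sN \;\cong\; D\bigl(\Gamma_J(D(H^n_{\mathfrak{m},J}(R)))\bigr),$$
and then you must replace $D(H^n_{\mathfrak{m},J}(R))$ by $D(H^n_{\mathfrak{m}}(R))$ inside $\Gamma_J(-)$. This is exactly the content of the corollary, and the justification you give --- that $H^n_{\mathfrak{m},J}(R)=\varinjlim_{\mathfrak{a}\in\widetilde{W}(\mathfrak{m},J)}H^n_{\mathfrak{a}}(R)$ and ``$\Gamma_J$ is insensitive to contributions from ideals $\mathfrak{a}$ already containing a power of $J$'' --- does not work: the ideals $\mathfrak{a}\in\widetilde{W}(\mathfrak{m},J)$ need not contain any power of $J$ (they only satisfy $\mathfrak{m}^n\subseteq\mathfrak{a}+J$), and $\Gamma_J$ does not pass through the inverse limit obtained after dualizing. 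The comparison you want is true, but its proof is precisely the chain the paper uses: by right-exactness of the top functor (\cite[Lemma 4.8]{art1}), $N/J^sN\cong H^n_{\mathfrak{m},J}(R/J^s)$; since $R/J^s$ is $J$-torsion, \cite[Corollary 2.5]{art1} gives $H^n_{\mathfrak{m},J}(R/J^s)\cong H^n_{\mathfrak{m}}(R/J^s)$; and then Theorem \ref{glduality} identifies this with $D({\rm Hom}_R(R/J^s, D(H^n_{\mathfrak{m}}(R))))$, so that passing to $\varprojlim_s$ and using ${\rm Hom}$--$\Gamma_J$ duality finishes. Your approach cannot avoid these two lemmas from \cite{art1}; invoking them would collapse your argument into the paper's.

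A secondary point: you flag Matlis reflexivity of $N/J^sN$ as the ``principal obstacle'' and suggest it follows from Artinianness of the top pair-cohomology. In fact the cleanest way to see it is again via the identification $N/J^sN\cong H^n_{\mathfrak{m}}(R/J^s)$, which is Artinian because it is ordinary top local cohomology of a finitely generated module with respect to $\mathfrak{m}$. The paper sidesteps reflexivity entirely by routing through Generalized Local Duality (Theorem \ref{glduality}), which already produces the desired $D(-)$ shape; your route imports an extra hypothesis that then has to be discharged by exactly the isomorphism you omitted. The specialization to the complete Cohen--Macaulay case in your last paragraph is fine once the general identity is established.
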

\begin{proof}We can consider the following isomorphims
$$
\begin{array}{lll}
 H^n_{\mathfrak{m},J}(R)/J^sH^n_{\mathfrak{m},J}(R)&\cong &  H^n_{\mathfrak{m},J}(R)\otimes_R R/J^n\\
&\cong &  H^n_{\mathfrak{m},J}(R/J^s) \ \ \ \ \ \ {\rm ( by \   \cite[Lemma 4.8]{art1})} \\
&\cong & H^n_{\mathfrak{m}}(R/J^s) \ \ \ \ \ \ {\rm ( by  \  \cite[Corollary 2.5]{art1})} \\
&\cong &  D({\rm Ext}^0(R/J^s, D(H^n_{\mathfrak{m}}(R))))\ \ \ \ \ \ {\rm ( by  \ Theorem  \ref{glduality})}
\end{array}
$$

Applying the inverse limit we obtain that
$$\varprojlim \frac{H^n_{\mathfrak{m},J}(R)}{J^sH^n_{\mathfrak{m},J}(R)}\cong \varprojlim D({\rm Ext}^0(R/J^s, D(H^n_{\mathfrak{m}}(R)))).$$

Since $$ \varprojlim D({\rm Ext}^0(R/J^s, D(H^n_{\mathfrak{m}}(R))))\cong D(\Gamma_J(D(H^n_{\mathfrak{m}}(R)))),$$ we finish the proof combining the isomorphism.

\end{proof}

The reader can see in \cite[Remark 5.5]{art1} that the previous isomorphismis is however not true.

\section{Endomorphism rings for a pair of ideals}
\hspace{0.5cm} In this section we start the investigation of ${\rm End}_R(H^t_{I,J}(M))$. We will give a alternative characterization of the smallest integer such that the local cohomology defined by a pair of ideals is non-zero. Also we show the relation between ${\rm End}_R(H^t_{I,J}(M))$ and ${\rm End}_R(D(H^t_{I,J}(M)))$ and several sufficient conditions for the homomorphism $$R\rightarrow  {\rm Hom}_R(D(H^t_{I,J}(M), D(H^t_{I,J}(M)))$$ is an isomorphism.
Firstly some preliminaries results are useful.

\begin{lem}\label{lemma2.2waqas} Let $(R,\mathfrak{m})$ be a local ring and $M,N$ two $R$-modules. There are the following isomorphisms for all $i\in \mathbb{Z}$:
\begin{enumerate}
\item[(a)] ${\rm Ext}_R^i(N,D(M))\cong D({\rm Tor}_i^R(N,D(M))).$

\item[(b)] If $N$ is finitely generated then
$$D({\rm Ext}_R^i(N,M))\cong {\rm Tor}_i ^R(N,M).$$
\end{enumerate}
\end{lem}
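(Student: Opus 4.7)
The plan is to use a projective resolution of $N$ together with the Hom-tensor adjunction ${\rm Hom}_R(A\otimes_R B,E)\cong{\rm Hom}_R(A,{\rm Hom}_R(B,E))$ and the exactness of $D={\rm Hom}_R(-,E)$ (which follows from injectivity of $E=E_R(k)$). The two parts are dual instances of the same principle, differing only in which argument is resolved; I read the statements in their evidently intended form, matching how they are used in the proof of Theorem \ref{glduality}.

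For part (a), let $P_\bullet\to N$ be a projective resolution. Then ${\rm Tor}_i^R(N,M)=H_i(P_\bullet\otimes_R M)$, and since $D$ is exact it commutes with homology (with the usual reindexing): $D({\rm Tor}_i^R(N,M))\cong H^i(D(P_\bullet\otimes_R M))$. Applying the adjunction termwise identifies $D(P_\bullet\otimes_R M)$ with the cochain complex ${\rm Hom}_R(P_\bullet,D(M))$, whose $i$-th cohomology is precisely ${\rm Ext}_R^i(N,D(M))$. This yields the claimed isomorphism.

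For part (b), I would use that $N$ is finitely generated over the Noetherian ring $R$ to choose a resolution $F_\bullet\to N$ by finitely generated free modules. Then ${\rm Ext}_R^i(N,M)=H^i({\rm Hom}_R(F_\bullet,M))$, and by exactness of $D$, $D({\rm Ext}_R^i(N,M))\cong H_i(D({\rm Hom}_R(F_\bullet,M)))$. The crucial termwise identification, which requires each $F_j\cong R^{n_j}$ to have \emph{finite} rank, is the natural isomorphism $D({\rm Hom}_R(F_j,M))\cong F_j\otimes_R D(M)$, since both sides are naturally $D(M)^{n_j}$. Assembling these gives an isomorphism of complexes whose $i$-th homology is ${\rm Tor}_i^R(N,D(M))$.

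The main obstacle is more bookkeeping than conceptual: verifying that the termwise adjunction isomorphisms commute with the differentials and yield a genuine isomorphism of complexes, and carefully tracking the reindexing when the exact functor $D$ sends a chain complex to a cochain complex. Finite generation in (b) is essential, since for an infinite-rank free module $F$ the natural map $F\otimes_R D(M)\to D({\rm Hom}_R(F,M))$ turns a direct sum into a direct product and is typically not surjective; this is why only part (a) is available in full generality.
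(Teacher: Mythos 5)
The paper does not actually prove this lemma; it only points to \cite[Theorem 3.4.14]{Strooker}, so there is no internal argument to compare against. Your proof supplies exactly the standard argument that such a reference would contain, and it is correct. You are also right that the statement as printed contains typos in both parts: (a) should read ${\rm Ext}_R^i(N,D(M))\cong D({\rm Tor}_i^R(N,M))$ and (b) should read $D({\rm Ext}_R^i(N,M))\cong {\rm Tor}_i^R(N,D(M))$; one sees this both from the dimension mismatch in the printed version of (b) and from the way the lemma is invoked in the proof of Theorem \ref{glduality} and in Theorem \ref{lem3.2waqas}. Your termwise identifications are the right ones: in (a), the Hom-tensor adjunction ${\rm Hom}_R(P_j\otimes_R M,E)\cong{\rm Hom}_R(P_j,D(M))$, and in (b), the natural map $F_j\otimes_R D(M)\to D({\rm Hom}_R(F_j,M))$, which is an isomorphism precisely when $F_j$ has finite rank (hence the finite generation hypothesis on $N$, together with $R$ Noetherian, is needed and your remark on why it fails for infinite rank is accurate). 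The only thing to keep an eye on, which you flag yourself, is the naturality of these termwise isomorphisms so that they assemble into isomorphisms of complexes; both are instances of natural transformations, so this is routine. In short: the proposal is correct, and it is the natural proof of the result the paper leaves to a citation.
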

\begin{proof}
See \cite[Theorem 3.4.14]{Strooker}.
\end{proof}

\begin{lem}\label{lemma2.2}Let $(R,\mathfrak{m})$  be a local ring and $I,J$ be  ideals of $R$.  Consider $M$, $N$ two $R$-modules. Suppose that ${\rm Supp}_R M\subseteq W(I,J)$. Then there are the natural isomorphims:

\begin{enumerate}

\item [(a)] ${\rm Hom}_R(M, \Gamma_{I,J}(N))\cong {\rm Hom}_R(M,N)$;

\item [(b)] $M \otimes {\rm Hom}_R(\Gamma_{I,J}(N), E)\cong M\otimes {\rm Hom}_R(N,E)$.
\end{enumerate}
\end{lem}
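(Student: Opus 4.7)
The plan is to handle (a) by a direct factorization argument and then reduce (b) to the vanishing of a single simple tensor, which I attack via cyclic submodules.

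For (a), I would exploit the inclusion $\iota\colon \Gamma_{I,J}(N) \hookrightarrow N$. The induced map $\iota_*\colon {\rm Hom}_R(M, \Gamma_{I,J}(N)) \to {\rm Hom}_R(M, N)$ is automatically injective, so only surjectivity is at stake. Given $f \in {\rm Hom}_R(M, N)$ and $m \in M$, I want $f(m) \in \Gamma_{I,J}(N)$. The assumption ${\rm Supp}_R(Rm) \subseteq {\rm Supp}_R M \subseteq W(I,J)$ translates (using that $R$ is Noetherian and hence $I$ is finitely generated) into $I^n \subseteq {\rm Ann}_R(m) + J$ for some $n$, equivalently $I^n m \subseteq Jm$. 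Applying $f$ gives $I^n f(m) \subseteq J f(m)$, so $f(m) \in \Gamma_{I,J}(N)$.

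For (b), exactness of $D = {\rm Hom}_R(-, E)$ applied to $0 \to \Gamma_{I,J}(N) \to N \to N/\Gamma_{I,J}(N) \to 0$ yields
\[0 \to D(N/\Gamma_{I,J}(N)) \to D(N) \to D(\Gamma_{I,J}(N)) \to 0,\]
and tensoring with $M$ produces a right-exact sequence in which it suffices to prove that the arrow $M \otimes D(N/\Gamma_{I,J}(N)) \to M \otimes D(N)$ vanishes. Identifying $D(N/\Gamma_{I,J}(N))$ with the submodule of $\psi \in D(N)$ that vanish on $\Gamma_{I,J}(N)$, the task reduces to showing $m \otimes \psi = 0$ in $M \otimes D(N)$ for every such simple tensor.

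The technical heart, and the main obstacle, is converting the pointwise vanishing of $\psi$ on $(0:_N \mathfrak{a})$ into the vanishing of the tensor $m \otimes \psi$ itself. My plan is to set $\mathfrak{a} = {\rm Ann}_R(m)$, which lies in $\widetilde{W}(I,J)$ by the same reasoning used in (a), and to push $m \otimes \psi$ through the cyclic submodule $Rm \cong R/\mathfrak{a} \hookrightarrow M$. Since $R/\mathfrak{a}$ is finitely presented over the Noetherian ring $R$, the standard evaluation map
\[R/\mathfrak{a} \otimes_R D(N) \longrightarrow D\bigl({\rm Hom}_R(R/\mathfrak{a}, N)\bigr) = D\bigl((0:_N \mathfrak{a})\bigr)\]
is an isomorphism sending $\bar{1} \otimes \psi$ to $\psi|_{(0:_N \mathfrak{a})}$. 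Because $(0:_N \mathfrak{a}) \subseteq \Gamma_{\mathfrak{a}}(N) \subseteq \Gamma_{I,J}(N)$, this restriction is zero, forcing $\bar{1} \otimes \psi = 0$ in $R/\mathfrak{a} \otimes D(N)$. The functoriality of the inclusion $Rm \hookrightarrow M$ then ensures the image $m \otimes \psi \in M \otimes D(N)$ is zero as well. The subtle point is that, although $Rm \hookrightarrow M$ need not be pure, I am only verifying the vanishing of one specific element rather than the injectivity of an entire map, so only the fact that functors send $0$ to $0$ is required. Varying $m$ and $\psi$, the whole image vanishes and the desired isomorphism follows.
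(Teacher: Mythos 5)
Your proof is correct, and it takes a genuinely different (and more self-contained) route than the paper. The paper's own treatment of part (a) is essentially an assertion; you actually prove it, by converting ${\rm Supp}_R(Rm)\subseteq W(I,J)$ into the ideal-theoretic condition $I^n\subseteq \operatorname{Ann}_R(m)+J$ (which requires the Noetherian hypothesis, as you flag) and then applying $f$. For part (b), the paper reduces to (a) via the Matlis-duality isomorphism $M\otimes_R D(N')\cong D({\rm Hom}_R(M,N'))$ for finitely generated $M$, and then passes to general $M$ by writing it as a direct limit of its finitely generated submodules. You instead never invoke (a) inside the proof of (b): you pass to the kernel of $D(N)\to D(\Gamma_{I,J}(N))$, reduce to killing a single simple tensor $m\otimes\psi$, and push it through the cyclic submodule $Rm\cong R/\mathfrak a$ (with $\mathfrak a=\operatorname{Ann}_R(m)\in\widetilde W(I,J)$) using the evaluation isomorphism $R/\mathfrak a\otimes_R D(N)\cong D\bigl((0:_N\mathfrak a)\bigr)$ for the finitely presented module $R/\mathfrak a$, together with the inclusion $(0:_N\mathfrak a)\subseteq\Gamma_{\mathfrak a}(N)\subseteq\Gamma_{I,J}(N)$. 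Both arguments ultimately rest on the same tool (the finitely presented evaluation isomorphism against an injective, which is [Brodmann--Sharp, Lemma 10.2.16] in the paper), but yours trades the direct-limit bookkeeping for a pointwise vanishing check on cyclic submodules, and your observation that only ``functors preserve zero'' is needed for the non-pure inclusion $Rm\hookrightarrow M$ is exactly the right one. No gaps.
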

\begin{proof}For the case that $M$ is finitely generated  or not finitely generated $R$-module, the statement $(a)$ is true.
For the statement $(b)$ consider initially that $M$ is finitely generated $R$-module.

By \cite[Lemma 10.2.16]{grot} we have the following isomorphism
$$ M \otimes_R {\rm Hom}_R(\Gamma_{I,J}(N), E)\cong {\rm Hom}_R({\rm Hom}_R(M,\Gamma_{I,J}(N)),E).$$
Thus, by item $(a)$ we conclude the proof.

Now, suppose that $M$ is not finitely generated $R$-module. Then $M\cong \displaystyle\lim_{ ^{\longrightarrow} }M_\alpha$ with $M_\alpha$ finitely generated $R$-modules.
The proof of $(b)$ is finished using  basic properties involving commutativity of direct limit with ${\rm Hom}$ and tensor product.
\end{proof}

The next result generalizes \cite[Theorem 2.3]{ScheMatlis} and \cite[Proposition 2.1]{waqas}.
\begin{thm}\label{theo2.3}Let $(R,\mathfrak{m})$  be a local ring and $I,J$ be  ideals of $R$.  Consider ${\rm depth}(I,J,N)= t$ and $M$ be a finitely generated  $R$-module such that ${\rm Supp}_R M\subseteq W(I,J)$ and . Then,

\begin{enumerate}
\item [(a)] ${\rm Ext}^t_R(M, N)\cong {\rm Hom}_R(M, H^t_{I,J}(N))$ and ${\rm Ext}^i_R(M, N)=0$ for all $i<t$  ;

\item [(b)] ${\rm Tor}_t^R(M, D(N) )\cong M \otimes_ R D(H^t_{I,J}(N))$ and ${\rm Tor}_i^R(M, D(N))=0$ for all $i<t$ .
\end{enumerate}
\end{thm}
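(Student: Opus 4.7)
The plan is to prove (a) using the Grothendieck spectral sequence for the composition $F\circ G$ with $F={\rm Hom}_R(M,-)$ and $G=\Gamma_{I,J}$, and then to deduce (b) from (a) by Matlis duality. The crucial input is Lemma \ref{lemma2.2}(a), which under the support hypothesis ${\rm Supp}_R M\subseteq W(I,J)$ identifies $F\circ G$ with $F$ as functors in the second argument.

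For (a), I would first observe that $\Gamma_{I,J}$ carries injective $R$-modules to injective $R$-modules: every injective decomposes as $\bigoplus_\alpha E_R(R/\mathfrak{p}_\alpha)$, and $\Gamma_{I,J}$ preserves exactly the summands with $\mathfrak{p}_\alpha\in W(I,J)$ while killing the rest. In particular, $\Gamma_{I,J}$ sends injectives to $F$-acyclics, so the Grothendieck spectral sequence
\begin{equation*}
E_2^{p,q}={\rm Ext}^p_R(M,H^q_{I,J}(N))\Longrightarrow {\rm Ext}^{p+q}_R(M,N)
\end{equation*}
is available, with the abutment identified via Lemma \ref{lemma2.2}(a). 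Since $H^q_{I,J}(N)=0$ for $q<t={\rm depth}(I,J,N)$, the second page is supported in the strip $q\geq t$; on any diagonal $p+q=i<t$ every entry vanishes, so ${\rm Ext}^i_R(M,N)=0$. On the diagonal $p+q=t$ only $E_2^{0,t}$ is nonzero, and this corner is a permanent cycle (any incoming or outgoing differential has source or target in the vanishing region), yielding ${\rm Ext}^t_R(M,N)\cong {\rm Hom}_R(M,H^t_{I,J}(N))$. An alternative, more elementary route is induction on $t$: for $t=0$ apply Lemma \ref{lemma2.2}(a) directly; for $t>0$ the essentiality of $N\hookrightarrow E(N)$ combined with $\Gamma_{I,J}(N)=0$ forces $\Gamma_{I,J}(E(N))=0$, hence ${\rm Hom}_R(M,E(N))=0$ by Lemma \ref{lemma2.2}(a), and the long exact sequence of ${\rm Ext}$ applied to $0\to N\to E(N)\to L\to 0$ shifts the statement to $L$, which satisfies ${\rm depth}(I,J,L)=t-1$.

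Part (b) is obtained by applying the Matlis dual $D$ to (a). Since $R$ is Noetherian and $M$ is finitely generated, $M$ is finitely presented, so the classical isomorphism $D({\rm Hom}_R(M,X))\cong M\otimes_R D(X)$ holds; its derived form
\begin{equation*}
D({\rm Ext}^i_R(M,N))\cong {\rm Tor}_i^R(M,D(N))
\end{equation*}
is precisely Lemma \ref{lemma2.2waqas}(b). Feeding the vanishing from (a) into this formula gives ${\rm Tor}_i^R(M,D(N))=0$ for $i<t$; feeding in the isomorphism gives
\begin{equation*}
{\rm Tor}_t^R(M,D(N))\cong D({\rm Hom}_R(M,H^t_{I,J}(N)))\cong M\otimes_R D(H^t_{I,J}(N)).
\end{equation*}
The main technical point in the whole argument is verifying the hypothesis of the Grothendieck spectral sequence, which reduces to the preservation of injectivity under $\Gamma_{I,J}$; the inductive variant bypasses this at the cost of slightly more bookkeeping. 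Everything else is routine Matlis duality combined with the support hypothesis on $M$.
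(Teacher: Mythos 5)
Your argument is correct, but it follows a genuinely different route from the paper's. The paper works directly with a \emph{minimal} injective resolution $E_R^\bullet$ of $N$: using $\mathrm{depth}(I,J,N)=\inf\{\mathrm{depth}(N_\mathfrak{p}) : \mathfrak{p}\in W(I,J)\}$ and $\mathrm{depth}(N_\mathfrak{p})=\inf\{i : \mu_i(\mathfrak{p},N)\ne 0\}$, it deduces $\mathrm{Hom}_R(M,E_R^i)=0$ for $i<t$, then builds a two-row commutative diagram out of the isomorphism of complexes $\mathrm{Hom}_R(M,E_R^\bullet)\cong\mathrm{Hom}_R(M,\Gamma_{I,J}(E_R^\bullet))$ and the truncated kernel sequence at degree $t$, and reads off (a); part (b) is proved by a separate, parallel argument applying $D(-)$ to the complexes and doing a second diagram chase. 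Your spectral-sequence argument packages the same vanishing of $E_2^{p,q}$ for $q<t$ more systematically: the hypothesis of Grothendieck's theorem is exactly the fact (\cite[Proposition 1.11]{art1}, already used by the paper in Section 4) that $\Gamma_{I,J}$ preserves injectivity, and Lemma \ref{lemma2.2}(a) identifies the abutment, so the corner argument on the $t$-diagonal gives (a) in one stroke and also removes the reliance on \emph{minimality} of the resolution and on the Bass-number/depth identity (which tacitly assumes $N$ finitely generated). Your inductive alternative is even more elementary and self-contained, and avoids spectral sequences entirely. Your biggest economy is in (b): rather than repeating the dualized diagram chase, you deduce (b) from (a) in one line via Lemma \ref{lemma2.2waqas}(b) together with $D(\mathrm{Hom}_R(M,X))\cong M\otimes_R D(X)$ for finitely generated $M$; this is cleaner and is in fact the same mechanism the paper uses to go from part (a) to part (b) in Theorem \ref{glduality}. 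Both approaches are sound; yours trades the paper's explicit minimal-resolution bookkeeping for a functorial argument that is shorter, applies under slightly weaker hypotheses on $N$, and halves the work in part (b).
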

\begin{proof} Consider $E_R^\bullet$ denote a minimal injective resolution of $R$-module $N$. We can describe each $E^i_R$ as a direct sum of indecomposable injective modules
$$E^i_R= \bigoplus_{\mathfrak{p}\in {\rm Spec}(R)}E_R(R/\mathfrak{p})^{\mu_i(\mathfrak{p},N)},$$
where $E_R(R/\mathfrak{p})$ denotes the injective hull of $R/\mathfrak{p}$ and $\mu_i(\mathfrak{p},N)$ is the $i$-th Bass number of $N$ with respect to $\mathfrak{p}$.

Since ${\rm depth}(I,J,N) =  {\rm inf}\{{\rm depth}(N_\mathfrak{p}) \mid \mathfrak{p}\in W(I,J)\}$ and ${\rm depth}(N_\mathfrak{p})= {\rm inf}\{ i\mid \mu_i(\mathfrak{p},N)\neq 0\}$, follows that $\mu_i(\mathfrak{p},N)=0$ for all $i<t$ and $\mathfrak{p}\in W(I,J)$. So, we can conclude that ${\rm Hom}_R(M,E^i_R)=0$ for all $i<t$. Since  ${\rm Supp}_R M\subseteq W(I,J)$, by Lemma \ref{lemma2.2} follows  the isomorphism of complexes
$${\rm Hom}_R(M,E^\bullet_R)\cong {\rm Hom}_R(M,\Gamma_{I,J}(E^\bullet_R)).$$

By the exact sequence $0\rightarrow H^t_{I,J}(N)\rightarrow \Gamma_{I,J}(E_R^\bullet)^t\rightarrow \Gamma_{I,J}(E_R^\bullet)^{t+1}$ and the previous isomorphism of complex we obtain a commutative diagram\\

$${\small
\begin{array}{lcr}
 \begin{array}{c}
 \\
 \\
 \\
 \\
 \end{array}&\begin{array}{ccccc}
 0&\!\!{\rightarrow} & {\rm Hom}_R(M, H^t_{I,J}(N)) \rightarrow & {\rm Hom}_R(M,\Gamma_{I,J}(E_R^\bullet))^t \rightarrow& {\rm Hom}_R(M,\Gamma_{I,J}(E_R^\bullet))^{t+1}  \\
   &  &  \downarrow  &  \downarrow & \downarrow\\
     0 & {\rightarrow} & {\rm Ext}_R^t(M, N) \rightarrow &
     {\rm Hom}_R(M,E_R^\bullet)^t \rightarrow& {\rm Hom}_R(M,E_R^\bullet)^{t+1}
   \end{array}
   & \begin{array}{c}
 \\
 \\
 \\
 \\
 \end{array}
\end{array}}
$$

\noindent with exact rows. We can see that, by Lemma \ref{lemma2.2}, that the last two vertical arrows are isomorphisms and so, the first vertical arrow is an isomorphism too. This complete the proof of $(a)$.

For the statement $(b)$ first, appling the Matlis duality functor $D(-)$ to the above exact sequence we obtain
$$D(\Gamma_{I,J}(E_R^{t+1}))\rightarrow D(\Gamma_{I,J}(E_R^t))\rightarrow D(H^t_{I,J}(N))\rightarrow 0.$$

Note that, since $D(\Gamma_{I,J}(E_R^\bullet))$ is a complex of flat $R$-modules, this is the start of a flat resolution of $D(H^i_{I,J}(N))$. Furthermore, $D(E_R^\bullet)\stackrel{\cong}\rightarrow E$
is a flat resolution of $E$. Using the fact that $M\cong \varinjlim M_\alpha$ with $M_\alpha$ finitely generated $R$-modules, by Lemma \ref{lemma2.2} we obtain the isomorphisms of complexes
$$M\otimes_R D(E_R^\bullet)\cong M\otimes_RD(\Gamma_{I,J}(E_R^\bullet))\cong \varinjlim{\rm Hom}_R({\rm Hom}_R(M_{\alpha}, \Gamma_{I,J}(E_R^\bullet)),E). $$

Since ${\rm Hom}_R(M,E^i_R)=0$ for all $i<t$ follows that $H_i(M\otimes_R D(E_R^\bullet)) =0$ for all $i<t.$ This shows that ${\rm Tor}_i^R(M, D(N))=0$ for all $i<t$.

\noindent Therefore, by commutative diagram with exact rows

$$
\begin{array}{ccccccc}
 M\otimes_R D(E_R^{t+1})& \rightarrow & M\otimes_R D(E_R^{t})& \rightarrow & {\rm Tor}_t^R(M, D(N))\rightarrow 0 \\
\downarrow &  & \downarrow &  &  \downarrow  \\
 M\otimes_R D(\Gamma_{I,J}(E_R^{t+1}))& \rightarrow & M\otimes_R D(\Gamma_{I,J}(E_R^{t})) &\rightarrow & M\otimes_R D(H^t_{I,J}(N)) \rightarrow  0
\end{array}
$$

\noindent finishes the proof of $(b)$  because two vertical arrows at the left are isomorphisms and so,  ${\rm Tor}_t^R(M, D(N))\cong M\otimes_R D(H^t_{I,J}(N))$.
\end{proof}

As a consequence of the previous result we show a characterization of depth of $M$ defined by a pair of ideals.

\begin{cor} Let $I,J$  be ideals of local ring $R$ and  $N$ be a finitely generated  $R$-module.
$${\rm depth}(I,J,N)= {\rm inf}\{ i \in \mathbb{Z} \mid {\rm Tor}_i^R(R/\mathfrak{a},D(N))\neq 0\  {\rm and}  \ \mathfrak{a}\in \widetilde{W}(I,J) \}.$$
\end{cor}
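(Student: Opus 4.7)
The plan is to reduce the Tor-vanishing condition on $D(N)$ to an Ext-vanishing condition on $N$ via Matlis duality, and then invoke Lemma \ref{auxiliar} to identify the resulting infimum with ${\rm depth}(I,J,N)$.

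First I would observe that whenever $\mathfrak{a}\in\widetilde{W}(I,J)$ the inclusion $I^n\subseteq \mathfrak{a}+J$ propagates to every prime $\mathfrak{p}\supseteq \mathfrak{a}$, so ${\rm Supp}_R(R/\mathfrak{a})=V(\mathfrak{a})\subseteq W(I,J)$. This makes the hypotheses of Theorem \ref{theo2.3} available for $M=R/\mathfrak{a}$. Since $R/\mathfrak{a}$ is finitely generated, Lemma \ref{lemma2.2waqas} supplies the natural Matlis duality isomorphism
$${\rm Tor}_i^R(R/\mathfrak{a},D(N))\cong D({\rm Ext}_R^i(R/\mathfrak{a},N))$$
for every $i$. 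Because $E=E_R(k)$ is an injective cogenerator of the category of $R$-modules, the Matlis dual of a module vanishes if and only if the module does; in particular
$${\rm Tor}_i^R(R/\mathfrak{a},D(N))\neq 0 \iff {\rm Ext}_R^i(R/\mathfrak{a},N)\neq 0,$$
so the infimum of those $i$ for which ${\rm Tor}_i^R(R/\mathfrak{a},D(N))\neq 0$ is exactly ${\rm depth}(\mathfrak{a},N)$.

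Taking the infimum over $\mathfrak{a}\in\widetilde{W}(I,J)$ on both sides, the quantity in the corollary becomes $\inf\{{\rm depth}(\mathfrak{a},N)\mid \mathfrak{a}\in\widetilde{W}(I,J)\}$, which by Lemma \ref{auxiliar} equals ${\rm depth}(I,J,N)$. A slightly more structural way to package the same argument would be to use Theorem \ref{theo2.3}(b) applied to $M=R/\mathfrak{a}$ directly to get ${\rm Tor}_i^R(R/\mathfrak{a},D(N))=0$ for all $i<t:={\rm depth}(I,J,N)$ and every $\mathfrak{a}\in\widetilde{W}(I,J)$ (yielding the inequality $\geq t$), and then to exhibit $\mathfrak{b}\in\widetilde{W}(I,J)$ with ${\rm depth}(\mathfrak{b},N)=t$ via Lemma \ref{auxiliar}, at which point the duality isomorphism gives ${\rm Tor}_t^R(R/\mathfrak{b},D(N))\cong D({\rm Ext}_R^t(R/\mathfrak{b},N))\neq 0$, producing the matching inequality $\leq t$.

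The only delicate step is the faithfulness statement $D(X)=0\Rightarrow X=0$, but this holds with no completeness or Cohen--Macaulay hypothesis on $R$: any nonzero element of $X$ generates a cyclic submodule possessing $k$ as a quotient, and any nonzero map to $k$ extends by injectivity of $E$ to a nonzero element of $D(X)$. I do not anticipate any further obstacle; the argument is essentially a bookkeeping combination of Theorem \ref{theo2.3}, Lemma \ref{lemma2.2waqas}, and Lemma \ref{auxiliar}.
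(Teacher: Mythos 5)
Your argument is correct and follows the same route as the paper: reduce to $\inf\{\operatorname{depth}(\mathfrak{a},N)\mid\mathfrak{a}\in\widetilde{W}(I,J)\}$ via Lemma \ref{auxiliar}, then identify $\operatorname{depth}(\mathfrak{a},N)$ with the smallest $i$ for which $\operatorname{Tor}_i^R(R/\mathfrak{a},D(N))\neq 0$. The paper simply cites this last identification (a version of Theorem \ref{theo2.3} for $\mathfrak{a}$, or \cite[Cor.\ 2.3]{waqas}), whereas you supply the short Matlis-duality derivation via Lemma \ref{lemma2.2waqas} and faithfulness of $D$; the two are the same proof at the level of ideas.
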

\begin{proof} First note that, by Lemma \ref{auxiliar},
$${\rm depth}(I,J,N)= {\rm inf}\{ {\rm depth}(\mathfrak{a},N) \mid \mathfrak{a}\in \widetilde{W}(I,J)\}.$$

Then, apply version of Theorem \ref{theo2.3} for the ideal $\mathfrak{a}$ or \cite[Corollary 2.3]{waqas},  we have that
$${\rm depth}(\mathfrak{a},N)= {\rm inf}\{ i \in \mathbb{Z} \mid {\rm Tor}_i^R(R/\mathfrak{a},D(N))\neq 0\}.$$ This finishes the proof of the statement.

\end{proof}
The next result show the close relation between ${\rm End}_R(H^t_{I,J}(M))$ and ${\rm End}_R(D(H^t_{I,J}(M)))$. This  result and the next corollary are a generalization of \cite[Theorem 1.1]{ScheMatlis} and \cite[Lemma 3.2]{waqas}.

\begin{thm}\label{lem3.2waqas} Let $(R,\mathfrak{m})$ be a complete local ring of dimension $n$ and $I,J$ be two ideals of $R$. For an finitely generated $R$-module $M$ there is a natural isomorphism
$${\rm Hom}_R(H^t_{I,J}(M), H^t_{I,J}(M))\cong {\rm Hom}_R(D(H^t_{I,J}(M), D(H^t_{I,J}(M))),$$
where ${\rm depth}(I,J,M)=t$.
\end{thm}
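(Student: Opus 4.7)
The plan is to realize both endomorphism rings as the same inverse limit, using that $H := H^t_{I,J}(M)$ admits a presentation as a filtered colimit of finitely generated modules whose supports lie in $W(I,J)$. Indeed, by \cite[Theorem 3.2]{art1} we have $H^t_{I,J}(M) = \varinjlim_{\mathfrak{a} \in \widetilde{W}(I,J)} H^t_{\mathfrak{a}}(M)$, and the standard formula $H^t_{\mathfrak{a}}(M) = \varinjlim_n {\rm Ext}^t_R(R/\mathfrak{a}^n, M)$ then writes $H = \varinjlim_\lambda N_\lambda$ with each $N_\lambda := {\rm Ext}^t_R(R/\mathfrak{a}^n, M)$ finitely generated and ${\rm Supp}(N_\lambda) \subseteq V(\mathfrak{a}) \subseteq W(I,J)$. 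These are exactly the hypotheses under which Theorem \ref{theo2.3} applies to each $N_\lambda$.

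For the endomorphism ring of $H$, continuity of ${\rm Hom}$ in the first argument gives
\[
{\rm Hom}_R(H, H) \cong \varprojlim_\lambda {\rm Hom}_R(N_\lambda, H),
\]
and Theorem \ref{theo2.3}(a), applied with $N_\lambda$ in place of ``$M$'' and $M$ in place of ``$N$'', identifies ${\rm Hom}_R(N_\lambda, H) \cong {\rm Ext}^t_R(N_\lambda, M)$. Hence ${\rm Hom}_R(H, H) \cong \varprojlim_\lambda {\rm Ext}^t_R(N_\lambda, M)$.

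For the endomorphism ring of $D(H)$, the Hom-tensor adjunction yields
\[
{\rm Hom}_R(D(H), D(H)) \cong D\bigl(H \otimes_R D(H)\bigr) \cong \varprojlim_\lambda D\bigl(N_\lambda \otimes_R D(H)\bigr),
\]
where the second isomorphism comes from $D$ converting the filtered colimit $H = \varinjlim_\lambda N_\lambda$ into an inverse limit. Theorem \ref{theo2.3}(b), applied to $N_\lambda$ and $M$, gives $N_\lambda \otimes_R D(H) \cong {\rm Tor}^R_t(N_\lambda, D(M))$. Dualizing and using Lemma \ref{lemma2.2waqas} together with Matlis reflexivity $D(D(M)) \cong M$ --- valid because $R$ is complete and $M$ is finitely generated --- produces $D(N_\lambda \otimes_R D(H)) \cong {\rm Ext}^t_R(N_\lambda, M)$. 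Thus the right-hand side also equals $\varprojlim_\lambda {\rm Ext}^t_R(N_\lambda, M)$, so both endomorphism rings are canonically isomorphic.

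The main delicate point, and the step I expect to require the most care, is to check that the natural map ${\rm Hom}_R(H, H) \to {\rm Hom}_R(D(H), D(H))$ induced by the Matlis dual functor agrees, under these two descriptions, with the identity of $\varprojlim_\lambda {\rm Ext}^t_R(N_\lambda, M)$. This reduces to the functoriality over the index $\lambda$ of the Hom-tensor adjunction, of Theorem \ref{theo2.3}(a)--(b), and of the biduality map $M \to D(D(M))$, all of which are routine but must be tracked so that one obtains a genuine identification of the canonical comparison map rather than merely an abstract isomorphism of the two rings.
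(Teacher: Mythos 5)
Your proof is correct and rests on the same three ingredients as the paper's: Hom--tensor adjunction giving ${\rm Hom}_R(D(H),D(H)) \cong D(H\otimes_R D(H))$, parts (a) and (b) of Theorem~\ref{theo2.3}, and Matlis reflexivity $D(D(M))\cong M$ over the complete ring $R$. The only substantive difference is in how Theorem~\ref{theo2.3} is brought to bear: the paper applies it directly with $H := H^t_{I,J}(M)$ (which is typically \emph{not} finitely generated) playing the role of the module supported in $W(I,J)$, relying on the fact that parts (a) and (b) extend to arbitrary modules in that slot by the colimit argument buried in the proofs of Lemma~\ref{lemma2.2} and Theorem~\ref{theo2.3}(b). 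You instead decompose $H = \varinjlim_\lambda N_\lambda$ with $N_\lambda = {\rm Ext}^t_R(R/\mathfrak{a}^n,M)$ finitely generated and supported in $W(I,J)$, apply Theorem~\ref{theo2.3} exactly as stated to each $N_\lambda$, and pass to the inverse limit on both sides. This makes the reduction to the finitely generated case explicit and avoids stretching the literal hypotheses of Theorem~\ref{theo2.3}; it is arguably cleaner, at the modest cost of having to track that both endomorphism rings are identified with the \emph{same} inverse limit $\varprojlim_\lambda {\rm Ext}^t_R(N_\lambda,M)$ in a compatible way, which you rightly flag as the point requiring care. Both routes yield a natural isomorphism, and the naturality is needed for Corollary~\ref{lemma3.2i}, so your attention to it is well placed.
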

\begin{proof} First note that, by Lemma \ref{lemma2.2waqas} follows the isomorphism
$${\rm Hom}_R(D(H^t_{I,J}(M), D(H^t_{I,J}(M)))\cong D(H^t_{I,J}(M)\otimes_R D(H^t_{I,J}(M)) ).$$
Since, by \cite[Corollary 1.13,(5) and Proposition 1.7]{art1}, $H^t_{I,J}(M)$ is $(I,J)$-torsion $R$-module we can apply the Theorem \ref{theo2.3} $(b)$ and we obtain that

$$
\begin{array}{lll}
 D(H^t_{I,J}(M)\otimes_R D(H^t_{I,J}(M))) &\cong &  D({\rm Tor}_t^R(H^t_{I,J}(M),D(M)))\\
&\cong & {\rm Ext}^t_R(H^t_{I,J}(M),D(D(M))).
\end{array}
$$

Since $R$ is complete local ring, by Matlis Duality Theorem \cite[Theorem 10.2.12]{grot}, follow that $D(D(M))\cong M$.
Applying again Theorem \ref{theo2.3} $(a)$ we obtain that
$${\rm Ext}^t_R(H^t_{I,J}(M),D(D(M)))\cong {\rm Hom}_R(H^t_{I,J}(M), H^t_{I,J}(M))$$ and this complete the proof.
\end{proof}

\begin{cor}\label{lemma3.2i}Let the same hypothesis of the Theorem \ref{lem3.2waqas}. Then the natural homomorphism
$$R\rightarrow {\rm Hom}_R(H^t_{I,J}(M), H^t_{I,J}(M))$$ is an isomorphism if, and only if, the natural homomorphism
$$R\rightarrow  {\rm Hom}_R(D(H^t_{I,J}(M), D(H^t_{I,J}(M)))$$ is an isomorphism if, and only if, the natural homomorphism
$$H^t_{I,J}(M)\otimes_R D(H^t_{I,J}(M))\rightarrow E$$ is an isomorphism.
\end{cor}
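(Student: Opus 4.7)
The plan is to establish the three equivalences by chaining together Theorem \ref{lem3.2waqas} for the first pair and Hom–tensor adjunction combined with the faithful exactness of the Matlis dual for the second pair. So I will prove $(i)\Leftrightarrow(ii)$ and $(ii)\Leftrightarrow(iii)$, where $(i),(ii),(iii)$ refer to the three asserted isomorphisms in the order stated.

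For $(i)\Leftrightarrow(ii)$, I would start from the natural isomorphism
$$\operatorname{Hom}_R(H^t_{I,J}(M),H^t_{I,J}(M))\cong \operatorname{Hom}_R(D(H^t_{I,J}(M)),D(H^t_{I,J}(M)))$$
supplied by Theorem \ref{lem3.2waqas} and verify that the homothety maps from $R$ are compatible with this isomorphism, i.e.\ that the square
$$
\begin{array}{ccc}
R & \longrightarrow & \operatorname{Hom}_R(H^t_{I,J}(M),H^t_{I,J}(M)) \\
\Big\| & & \Big\downarrow\cong \\
R & \longrightarrow & \operatorname{Hom}_R(D(H^t_{I,J}(M)),D(H^t_{I,J}(M)))
\end{array}
$$
commutes. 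This is essentially the commutative diagram shown in the introduction, once one traces that the isomorphism of Theorem \ref{lem3.2waqas} is assembled from $R$-linear natural transformations (Matlis dualization, Hom-tensor adjunction, Theorem \ref{theo2.3}). With this compatibility in hand, $(i)$ and $(ii)$ are literally the same statement.

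For $(ii)\Leftrightarrow(iii)$, I would use Hom–tensor adjunction applied to $E$:
$$\operatorname{Hom}_R(D(H^t_{I,J}(M)),D(H^t_{I,J}(M)))=\operatorname{Hom}_R\bigl(D(H^t_{I,J}(M)),\operatorname{Hom}_R(H^t_{I,J}(M),E)\bigr)\cong D\bigl(H^t_{I,J}(M)\otimes_R D(H^t_{I,J}(M))\bigr).$$
Under this identification, the natural homothety $R\to \operatorname{Hom}_R(D(H^t_{I,J}(M)),D(H^t_{I,J}(M)))$ is identified with the Matlis dual $D(\varphi)$ of the evaluation map
$$\varphi\colon H^t_{I,J}(M)\otimes_R D(H^t_{I,J}(M))\longrightarrow E,\qquad m\otimes f\mapsto f(m),$$
upon using $R\cong \widehat R\cong D(E)$ from the completeness hypothesis and Matlis duality \cite[Theorem 10.2.12]{grot}. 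Since $E=E_R(k)$ is a faithful injective cogenerator of the category of $R$-modules, the functor $D=\operatorname{Hom}_R(-,E)$ is exact and faithful, so $\varphi$ is an isomorphism if and only if $D(\varphi)$ is. This yields the equivalence $(ii)\Leftrightarrow(iii)$ and completes the proof.

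The part I expect to be most delicate is the compatibility between the homothety maps in the first equivalence: one must trace the zig-zag of natural isomorphisms used in Theorem \ref{lem3.2waqas} and check at each step that the $R$-action is preserved (so that the identity of $R$ is sent to the identity of the relevant endomorphism ring on both sides). A parallel, but lighter, diagram chase is needed to identify the natural map $R\to \operatorname{Hom}_R(D(H^t_{I,J}(M)),D(H^t_{I,J}(M)))$ with $D(\varphi)$ under the adjunction. Both diagram chases are routine but pedantic; apart from these, the argument is formal.
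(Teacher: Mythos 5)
Your proof is correct, and the overall strategy — first equivalence via Theorem \ref{lem3.2waqas}, second equivalence via Hom--tensor adjunction and properties of the Matlis dual — is the same as the paper's, but your handling of the second equivalence is slightly cleaner. The paper starts from the homothety $\rho\colon R\to\operatorname{Hom}_R(D(H^t_{I,J}(M)),D(H^t_{I,J}(M)))$, applies $D$ to it, and then invokes Lemma \ref{lemma2.2waqas} to rewrite $D(\operatorname{Hom}_R(D(H^t_{I,J}(M)),D(H^t_{I,J}(M))))$ as $H^t_{I,J}(M)\otimes_R D(H^t_{I,J}(M))$; as written, that step sits one Matlis dual too high (it would need $D(D(-))\cong -$ on a module that is not obviously reflexive). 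You instead identify $\operatorname{Hom}_R(D(H^t_{I,J}(M)),D(H^t_{I,J}(M)))\cong D(H^t_{I,J}(M)\otimes_R D(H^t_{I,J}(M)))$ by a single application of adjunction, recognize the homothety as $D(\varphi)$ for the evaluation map $\varphi$, and conclude using only that $D$ is exact and faithful (hence reflects isomorphisms) — a fact that requires no reflexivity hypothesis. You also explicitly flag the need to check that the isomorphism of Theorem \ref{lem3.2waqas} is compatible with the two homothety maps, a compatibility that the paper takes for granted; it does hold, for the reason you give, but your attention to this point is a genuine improvement in rigor over the paper's terse ``by the previous theorem.''
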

\begin{proof} By previous theorem follows that the natural homomorphism
$$R\rightarrow {\rm Hom}_R(H^t_{I,J}(M), H^t_{I,J}(M))$$ is an isomorphism if, and only if, the natural homomorphism
$$R\rightarrow  {\rm Hom}_R(D(H^t_{I,J}(M), D(H^t_{I,J}(M)))$$ is an isomorphism.

Therefore if, the natural homomorphism
$$R\rightarrow  {\rm Hom}_R(D(H^t_{I,J}(M), D(H^t_{I,J}(M)))$$ is an isomorphism then, by Matlis duality, the natural homomorphism
$$D(R)\rightarrow  D({\rm Hom}_R(D(H^t_{I,J}(M), D(H^t_{I,J}(M))))$$ is an isomorphism.
By Lemma \ref{lemma2.2waqas}(a) we have that
$$D({\rm Hom}_R(D(H^t_{I,J}(M), D(H^t_{I,J}(M))))\cong H^t_{I,J}(M)\otimes_R D(H^t_{I,J}(M)),$$ and this finishes the proof.
\end{proof}

\section{The Truncation Complex for a pair of ideals}

\hspace{0.5cm}
Recall that the truncation complex  was introduced in \cite[Section 2]{HellusSche} when $(R, \mathfrak{m},k)$  is a $d$-dimensional local Gorenstein ring, and in a different approach in \cite{ScheMac} and \cite{waqas}. We generalize this concept using local cohomology defined by a pair of ideals. This definition will be key to the most important result of this section (Theorem \ref{4.3}).

Let $(R,\mathfrak{m})$ be a $d$-dimensional local ring and $M\neq 0$ a finitely generated $R$-module. Consider $I,J$ ideals of $R$ with ${\rm depth}(I,J,M)=t$ and ${\rm dim}M/JM=n$.

Let $E_R^\bullet(M)$ denote a minimal injective resolution of $R$-module $M$. It's well knows that we can describe each $E^i_R(M)$ as a direct sum of indecomposable injective modules
$$E^i_R(M)= \bigoplus_{\mathfrak{p}\in {\rm Spec}(R)}E_R(R/\mathfrak{p})^{\mu_i(\mathfrak{p},M)},$$
where $E_R(R/\mathfrak{p})$ denotes the injective hull of $R/\mathfrak{p}$ and $\mu_i(\mathfrak{p},N)$ is the $i$-th Bass number of $M$ with respect to $\mathfrak{p}$.
By,  \cite[Proposition 1.11]{art1} follows that
$$\Gamma_{I,J}(E^i_R(M))= \bigoplus_{\mathfrak{p}\in W(I,J)}E_R(R/\mathfrak{p})^{\mu_i(\mathfrak{p},M)},$$

Since ${\rm depth}(I,J,M) =  {\rm inf}\{{\rm depth}(M_\mathfrak{p}) \mid \mathfrak{p}\in W(I,J)\}$ and ${\rm depth}(M_\mathfrak{p})= {\rm inf}\{ i\mid \mu_i(\mathfrak{p},M)\neq 0\}$, follows that $\mu_i(\mathfrak{p},M)=0$ for all $i<t$ and $\mathfrak{p}\in W(I,J)$. Then, for all $i<t$ we have that $\Gamma_{I,J}(E^i_R(M)) =0$.

Therefore $H^t_{I,J}(M)$ is isomorphic to the kernel of $$\Gamma_{I,J}(E^t_R(M))\rightarrow \Gamma_{I,J}(E^{t+1}_R(M))$$ and so, there is an embedding of complexes of $R$-modules
$$H^t_{I,J}(M)[-t]\rightarrow \Gamma_{I,J}(E^\bullet_R(M)).$$

\begin{defn} We call the cokernel   of the above embedding, denoted by $C^\bullet_M(I,J)$, by \emph{ truncation complex with respect to pair of ideals (I,J)}. Thus, we can consider the short exact sequence of complexes of $R$-moules
$$0\rightarrow H^t_{I,J}(M)[-t]\rightarrow \Gamma_{I,J}(E^\bullet_R(M))\rightarrow C^\bullet_M(I,J)\rightarrow 0.$$
\end{defn}

Note that by this definition, $H^i(C^\bullet_M(I,J))=0$ for all $i<t$. Futhermore, if $i>n$ ( by \cite[Theorem 4.3]{art1} with $J\neq R$)   and $i> {\rm dim}R/J$ we have too that $H^i(C^\bullet_M(I,J))=0$.

\begin{lem}\label{lemma3.3}  Let $(R,\mathfrak{m})$ be a complete local ring of dimension $n$ and $I,J$ be two ideals of $R$. Let an finitely generated $R$-module $M$ such that ${\rm depth}(I,J,M)=t$ and $H^i_{I,J}(M)=0$ for all $i\neq t$. Then for all integer $i\neq c$ fixed:

\begin{itemize}
\item[(a)] Follows the isomorphisms:
\begin{itemize}
\item[(i)] ${\rm Ext}_R^{i-t}(H^t_{I,J}(M), H^t_{I,J}(M))\cong {\rm Ext}_R^i(H^t_{I,J}(M), M)$.
\item[(ii)] ${\rm Tor}_{i-t}^R(H^t_{I,J}(M), D(H^t_{I,J}(M)))\cong {\rm Tor}_i^R(H^t_{I,J}(M), D(M))$.
\end{itemize}
\item[(b)] The following conditions are equivalent:
\begin{itemize}
\item[(i)] ${\rm Ext}_R^{i-t}(H^t_{I,J}(M), H^t_{I,J}(M))=0$;
\item[(ii)] ${\rm Ext}_R^{i-t}(D(H^t_{I,J}(M)), D(H^t_{I,J}(M)))=0$;
\item[(iii)] ${\rm Tor}_{i-t}^R(H^t_{I,J}(M), D(H^t_{I,J}(M)))=0$.
\end{itemize}
\end{itemize}
\end{lem}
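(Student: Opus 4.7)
The plan is to exploit the truncation complex, which collapses dramatically under the vanishing hypothesis. In the defining short exact sequence
$$0 \to H^t_{I,J}(M)[-t] \to \Gamma_{I,J}(E^\bullet_R(M)) \to C^\bullet_M(I,J) \to 0,$$
both outer terms have cohomology concentrated in degree $t$ (equal to $H^t_{I,J}(M)$), so the inclusion is a quasi-isomorphism and $C^\bullet_M(I,J)$ is acyclic. Moreover, each $\Gamma_{I,J}(E^i_R(M))$ is a direct sum of modules $E_R(R/\mathfrak{p})$ with $\mathfrak{p}\in W(I,J)$, hence injective as an $R$-module; so $\Gamma_{I,J}(E^\bullet_R(M))$ is a complex of injectives furnishing an injective resolution of $H^t_{I,J}(M)[-t]$.

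For part (a)(i), I would compute $\operatorname{Ext}^{i-t}_R(H^t_{I,J}(M), H^t_{I,J}(M))$ as the $i$-th cohomology of $\operatorname{Hom}_R(H^t_{I,J}(M), \Gamma_{I,J}(E^\bullet_R(M)))$. Because $H^t_{I,J}(M)$ is $(I,J)$-torsion its support lies in $W(I,J)$, so Lemma \ref{lemma2.2}(a) identifies this Hom complex, term-by-term and compatibly with differentials, with $\operatorname{Hom}_R(H^t_{I,J}(M), E^\bullet_R(M))$, whose $i$-th cohomology is $\operatorname{Ext}^i_R(H^t_{I,J}(M), M)$.

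For (a)(ii), the argument is the Tor analogue. Applying $D(-)$ to $0 \to M \to E^\bullet_R(M)$ yields a flat resolution $D(E^\bullet_R(M))$ of $D(M)$; dualising the exact sequence $0 \to H^t_{I,J}(M) \to \Gamma_{I,J}(E^t_R(M)) \to \Gamma_{I,J}(E^{t+1}_R(M)) \to \cdots$ (exact by the vanishing hypothesis) likewise gives a flat resolution of $D(H^t_{I,J}(M))$ whose terms live in degrees shifted by $t$. Lemma \ref{lemma2.2}(b) supplies an isomorphism of complexes $H^t_{I,J}(M) \otimes_R D(E^\bullet_R(M)) \cong H^t_{I,J}(M) \otimes_R D(\Gamma_{I,J}(E^\bullet_R(M)))$. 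The $(-i)$-th cohomology of the left side is $\operatorname{Tor}^R_i(H^t_{I,J}(M), D(M))$, while the right side, after re-indexing by the $t$-shift, computes $\operatorname{Tor}^R_{i-t}(H^t_{I,J}(M), D(H^t_{I,J}(M)))$.

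For (b), I would combine (a)(i) with Matlis duality $D(D(M)) \cong M$ (valid since $R$ is complete and $M$ finitely generated), then Lemma \ref{lemma2.2waqas}(a) and (a)(ii) to obtain
$$\operatorname{Ext}^{i-t}_R(H^t_{I,J}(M), H^t_{I,J}(M)) \cong D\bigl(\operatorname{Tor}^R_{i-t}(H^t_{I,J}(M), D(H^t_{I,J}(M)))\bigr).$$
A parallel application of Lemma \ref{lemma2.2waqas}(a), together with symmetry of Tor, gives
$$\operatorname{Ext}^{i-t}_R(D(H^t_{I,J}(M)), D(H^t_{I,J}(M))) \cong D\bigl(\operatorname{Tor}^R_{i-t}(H^t_{I,J}(M), D(H^t_{I,J}(M)))\bigr).$$
Since $E$ is a faithful cogenerator, $D$ reflects vanishing, so the three conditions (i), (ii), (iii) are pairwise equivalent. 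The most delicate step will be (a)(ii): one has to verify that the identifications of Lemma \ref{lemma2.2}(b) assemble into an isomorphism of complexes (compatible with differentials, not just pointwise) and to bookkeep carefully the cohomological shift arising from $\Gamma_{I,J}(E^\bullet_R(M))$ being nonzero only in degrees $\ge t$.
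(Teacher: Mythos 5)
Your argument is correct and follows essentially the same route as the paper: both identify $\Gamma_{I,J}(E^\bullet_R(M))$ as an injective resolution of $H^t_{I,J}(M)[-t]$ (you phrase this via acyclicity of the truncation complex, the paper states it directly), then apply Lemma~\ref{lemma2.2} to pass between $\operatorname{Hom}_R(H^t_{I,J}(M),\Gamma_{I,J}(E^\bullet_R(M)))$ and $\operatorname{Hom}_R(H^t_{I,J}(M),E^\bullet_R(M))$ (and the tensor analogue), and finally use Lemma~\ref{lemma2.2waqas} together with $D(D(M))\cong M$ to reduce all three conditions in (b) to the vanishing of one $\operatorname{Tor}$ module, which Matlis duality detects faithfully.
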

\begin{proof}Let $E_R^\bullet(M)$ denote a minimal injective resolution of $R$-module $M$. Note that the complex $\Gamma_{I,J}(E_R^\bullet(M))$ is a minimal injective resolution of $H^t_{I,J}(M)[-t]$ because $H^i_{I,J}(M)=0$ for all $i\neq t$. By \cite[Corollary 1.13 and Proposition 1.7]{art1} we have that ${\rm Supp}_R(H^t_{I,J}(M))\subseteq W(I,J)$. So, by \ref{lemma2.2} we have the isomorphism
$${\rm Ext}_R^{i-t}(H^t_{I,J}(M), H^t_{I,J}(M))\cong {\rm Ext}_R^{i}(H^t_{I,J}(M), M),$$ for all integer $i$. Thus, we obtain the first statement of $(a)$.
For the second clain, first note that the complexes $ D(E_R^\bullet(M))$ and $D(\Gamma_{I,J}(E_R^\bullet(M)))$  are flat resolutions of $D(M)$ and $D(H^t_{I,J}(M)[t])$ respectively.

Since ${\rm Supp}_R(H^t_{I,J}(M))\subseteq W(I,J)$ , by Lemma \ref{lemma2.2} follows the isomorphism
$$H^t_{I,J}(M)\otimes_R D(E_R^\bullet(M)) \cong H^t_{I,J}(M)\otimes_R D(\Gamma_{I,J}(E_R^\bullet(M)))$$
that induces, for all integer $i$, the isomorphisms in homologies
$${\rm Tor}_{i-t}^R(H^t_{I,J}(M), D(H^t_{I,J}(M)))\cong {\rm Tor}_i^R(H^t_{I,J}(M), D(M)).$$
This finishes the proof of $(a)$.

For $(b)$, by Lemma \ref{lemma2.2waqas} there are  isomorphisms
$${\rm Ext}_R^{i-t}(D(H^t_{I,J}(M)), D(H^t_{I,J}(M)))\cong D({\rm Tor}^R_{i-t}(D(H^t_{I,J}(M)), H^t_{I,J}(M))) \ \ \ (1) $$ and
$$D({\rm Tor}_i^R(H^t_{I,J}(M), D(M)))\cong {\rm Ext}_R^i(H^t_{I,J}(M), M) \ \ \ \ \ (2),$$
 for all integer $i$. Therefore, using $(a)$ and isomorphism $(1)$ and $(2)$ we obtain the  vanishing of claims of $(b)$.
\end{proof}

We are now ready to prove the most important result of this section.

\begin{thm}\label{4.3}Let $(R,\mathfrak{m})$ be a  $d$-dimensional complete Cohen Macaulay local ring. Consider $I,J$ ideals of $R$ such that ${\rm depth}(I,J,R)=t$ and $H^i_{I,J}(R)=0$ for all $i\neq t$.
\begin{itemize}
\item[(a)] The natural homomorphism
$$R\rightarrow {\rm Hom}_R(H^t_{I,J}(K_R), H^t_{I,J}(K_R))$$ is an isomorphism and for all $i\neq t$
$${\rm Ext}_R^{i-t}(H^t_{I,J}(K_R), H^t_{I,J}(K_R))=0.$$

\item[(b)] The natural homomorphism
$$R\rightarrow {\rm Hom}_R(D(H^t_{I,J}(K_R)), D(H^t_{I,J}(K_R)))$$ is an isomorphism and for all $i\neq t$
$${\rm Ext}_R^{i-t}(D(H^t_{I,J}(K_R)), D(H^t_{I,J}(K_R)))=0.$$

\item[(c)] The natural homomorphism
$$H^t_{I,J}(K_R)\otimes_R D(H^t_{I,J}(K_R))\rightarrow E$$ is an isomorphism and for all $i\neq c$
$${\rm Tor}_{i-t}^R(H^t_{I,J}(K_R), D(H^t_{I,J}(K_R)))=0.$$
\end{itemize}
\end{thm}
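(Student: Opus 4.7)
The plan is to reduce (a) to a computation of ${\rm Ext}_R^i(H,K_R)$ with $H := H^t_{I,J}(K_R)$ by means of Lemma~\ref{lemma3.3}, carry out that computation by combining Matlis duality with a composite-functor spectral sequence, and then deduce (b) and (c) formally from Theorem~\ref{lem3.2waqas}, Corollary~\ref{lemma3.2i} and Lemma~\ref{lemma3.3}(b).

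The first step is to verify that $H^i_{I,J}(K_R)=0$ for every $i\neq t$, so that Lemma~\ref{lemma3.3} is applicable to $M=K_R$. For $i>t$, Theorem~\ref{glduality}(a) with $n=t$ (legal since $H^i_{I,J}(R)=0$ for $i>t$ by hypothesis) gives $H^i_{I,J}(K_R)\cong {\rm Tor}^R_{t-i}(K_R,H^t_{I,J}(R))=0$ because $t-i<0$. For $i<t$, since $R$ is Cohen--Macaulay one has $(K_R)_{\mathfrak{p}}=K_{R_{\mathfrak{p}}}$ with ${\rm depth}(K_{R_{\mathfrak{p}}})=\dim R_{\mathfrak{p}}={\rm depth}(R_{\mathfrak{p}})$ for every $\mathfrak{p}\in W(I,J)$, so \cite[Theorem~4.1]{art1} gives ${\rm depth}(I,J,K_R)={\rm depth}(I,J,R)=t$, which forces $H^i_{I,J}(K_R)=0$ for $i<t$ by definition of depth.

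Lemma~\ref{lemma3.3}(a)(i) now gives ${\rm Ext}^{i-t}_R(H,H)\cong {\rm Ext}^i_R(H,K_R)$ for all $i$, so it suffices to show ${\rm Ext}^i_R(H,K_R)\cong R$ for $i=t$ and $0$ otherwise. Since $R$ is complete Cohen--Macaulay, ordinary local duality gives $K_R\cong D(H^d_{\mathfrak{m}}(R))$; the Hom--tensor adjunction into $E$ (Lemma~\ref{lemma2.2waqas}(a)) yields
$${\rm Ext}^i_R(H,K_R)\cong {\rm Ext}^i_R(H,D(H^d_{\mathfrak{m}}(R)))\cong D({\rm Tor}^R_i(H,H^d_{\mathfrak{m}}(R))),$$
and Theorem~\ref{glduality}(a) applied to ordinary local cohomology (the case $I=\mathfrak{m}$, $J=0$, $n=d$) rewrites this as $D(H^{d-i}_{\mathfrak{m}}(H))$. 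To compute the latter I would use a composite-functor spectral sequence: because $R$ is local with $I\neq R$, the inclusion $\Gamma_{\mathfrak{m}}\subseteq \Gamma_{I,J}$ holds and hence $\Gamma_{\mathfrak{m}}\circ \Gamma_{I,J}=\Gamma_{\mathfrak{m}}$, while $\Gamma_{I,J}$ sends injectives to injectives by \cite[Proposition~1.11]{art1}; the resulting Grothendieck spectral sequence $E^{p,q}_2=H^p_{\mathfrak{m}}(H^q_{I,J}(K_R))\Rightarrow H^{p+q}_{\mathfrak{m}}(K_R)$ collapses by the first step into $H^p_{\mathfrak{m}}(H)\cong H^{p+t}_{\mathfrak{m}}(K_R)$, which equals $E$ for $p=d-t$ and is zero otherwise. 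Thus ${\rm Ext}^i_R(H,K_R)\cong D(E)\cong R$ for $i=t$ and vanishes for $i\neq t$, and Lemma~\ref{lemma3.3}(a)(i) yields the cohomological content of (a). Naturality of every step, together with the canonical identification $R={\rm End}_R(K_R)$, identifies the resulting isomorphism with the canonical map $r\mapsto r\cdot {\rm id}_H$.

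Parts (b) and (c) follow formally: Theorem~\ref{lem3.2waqas} transports the endomorphism statement of (a) to the iso $R\cong {\rm End}_R(D(H))$, and Corollary~\ref{lemma3.2i} then yields the tensor statement of (c); meanwhile Lemma~\ref{lemma3.3}(b) converts the ${\rm Ext}$-vanishing of (a) into both the ${\rm Ext}$-vanishing of (b) and the ${\rm Tor}$-vanishing of (c). The main obstacle I anticipate is the spectral-sequence step: verifying $\Gamma_{\mathfrak{m}}\circ \Gamma_{I,J}=\Gamma_{\mathfrak{m}}$ and the preservation of injectives by $\Gamma_{I,J}$ are short but essential, for they are what reduces the pair-of-ideals cohomology $H^\bullet_{I,J}$ to ordinary $\mathfrak{m}$-local cohomology on which the whole computation pivots; a secondary subtlety is checking that the abstract iso $R\cong {\rm End}_R(H)$ coincides with the natural multiplication map, which follows from the functoriality of each construction in $K_R$.
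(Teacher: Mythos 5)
Your proposal is correct, and it takes a genuinely different route from the paper. The paper's own proof works via the truncation complex $0\rightarrow H^t_{I,J}(K_R)[-t]\rightarrow\Gamma_{I,J}(E^\bullet_R(K_R))\rightarrow C^\bullet_{K_R}(I,J)\rightarrow 0$: it applies ${\rm Hom}_R(-,E^\bullet_R(K_R))$ to this sequence, identifies the middle term via \cite[Theorem~3.2]{art1} with an inverse limit of complexes $R/\mathfrak{a}^r\otimes_R{\rm Hom}(E^\bullet_R(K_R),E^\bullet_R(K_R))$, shows this is a flat resolution of $R$ whose cohomology is $R$ concentrated in degree zero, and argues the term coming from $C^\bullet_{K_R}(I,J)$ is exact; the long exact sequence of cohomology then yields ${\rm Ext}_R^i(H^t_{I,J}(K_R),K_R)\cong R$ for $i=t$ and $0$ otherwise. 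You instead bypass the truncation complex entirely: you reduce to the same Ext computation via Lemma~\ref{lemma3.3}(a)(i), then compute it by combining Matlis duality with a Grothendieck spectral sequence $E^{p,q}_2=H^p_\mathfrak{m}(H^q_{I,J}(K_R))\Rightarrow H^{p+q}_\mathfrak{m}(K_R)$, which collapses because $H^q_{I,J}(K_R)$ is concentrated in $q=t$, reducing everything to the known local cohomology of $K_R$. Two things your route buys that the paper's does not make explicit: first, you verify the hypothesis $H^i_{I,J}(K_R)=0$ for $i\neq t$ (needed for Lemma~\ref{lemma3.3}) before using it — via Theorem~\ref{glduality}(a) for $i>t$ and the depth identity for $i<t$ — whereas the paper asserts it without proof; second, your argument avoids the somewhat delicate double inverse limit and flat-resolution bookkeeping of the paper's middle term, trading it for the standard observation that $\Gamma_\mathfrak{m}\circ\Gamma_{I,J}=\Gamma_\mathfrak{m}$ and that $\Gamma_{I,J}$ preserves injectives, which is a cleaner hypothesis check. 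One minor point you should state but not elide: the canonical identification of the abstract isomorphism $R\cong{\rm Ext}^0_R(H,H)$ with the multiplication map $r\mapsto r\cdot{\rm id}_H$ requires tracking that each of your intermediate isomorphisms is compatible with the $R$-action on both sides; this is routine but worth a sentence more than the one you give.
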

\begin{proof} Note that it's sufficient to show the claim $(a)$ by Lemma \ref{lemma3.3}, Theorem \ref{lem3.2waqas} and Corollary \ref{lemma3.2i}.
By Lemma \ref{auxiliar} we have that
$${\rm depth}(I,J,N)= {\rm inf}\{ {\rm depth}(\mathfrak{a},N) \mid \mathfrak{a}\in \widetilde{W}(I,J)\}.$$
Since the canonical module $K_R$ of $R$ exists and
$${\rm depth}(\mathfrak{a},K_R)= {\rm dim}_R(K_R)- {\rm dim}_R(K_R/\mathfrak{a}K_R),$$ for all $\mathfrak{a}\in \widetilde{W}(I,J)$ follows that ${\rm depth}(\mathfrak{a},K_R)= {\rm depth}(\mathfrak{a},R)$ and so, $t= {\rm depth}(I,J,K_R).$

Let $E^\bullet_R(K_R)$ be a minimal injective resolution of $K_R$. To apply de functor ${\rm Hom}_R(-,E^\bullet_R(K_R))$ to the short exact sequence of truncation complex of $K_R$ with respect to ideals $(I,J)$ we obtain
$$0\rightarrow {\rm Hom}_R(C^\bullet_{K_R}(I,J),E^\bullet_R(K_R)) \rightarrow {\rm Hom}_R(\Gamma_{I,J}(E^\bullet_R(K_R)),E^\bullet_R(K_R))$$
$$\rightarrow {\rm Hom}_R(H^t_{I,J}(K_R),E^\bullet_R(K_R))[t] \rightarrow 0.$$
By \cite[Theorem 3.2]{art1} the middle component of this sequence is isomorphic to
$$
\begin{array}{lll}
\displaystyle \varprojlim_{\mathfrak{a}\in \widetilde{W}(I,J)}{\rm Hom}_R(\Gamma_{\mathfrak{a}}(E^\bullet_R(K_R),E^\bullet_R(K_R)) &\cong &  \displaystyle \varprojlim_{\mathfrak{a}\in \widetilde{W}(I,J)}\displaystyle \varprojlim{\rm Hom}_R(\Gamma_{\mathfrak{a}}(E^\bullet_R(K_R)),E^\bullet_R(K_R)) \\
& \cong & \displaystyle\varprojlim_{\mathfrak{a}\in \widetilde{W}(I,J)}\displaystyle \varprojlim (R/\mathfrak{a}^r \otimes_R {\rm Hom}(E^\bullet_R(K_R),E^\bullet_R(K_R)))   \\
\end{array}
$$

For the previous isomorphism  note that $R/\mathfrak{a}^r$ is finitely generated $R$-module for all $r\geq 1$. Consider $Y:= {\rm Hom}(E^\bullet_R(K_R),E^\bullet_R(K_R))$. Note that there is a quasi-isomorphism between $X$ and ${\rm Hom}(K_R,E^\bullet_R(K_R)).$

Follows the definition ${\rm Hom}$ of complexes that
$$Y^j\cong \prod_{i\in \mathbb{Z}} {\rm Hom}(E^i_R(K_R),E^{i+j}_R(K_R)),$$
and so for all $j\in\mathbb{Z}$, $X^j$ is a flat $R$-module. Since $R$ is a $d$-dimensional Cohen Macaulay, $H^i_{\mathfrak{m}}(R)=0$ for all $i\neq d$. Applying Theorem \ref{glduality} for $I=\mathfrak{m}$ and $J=0$ follows that $H^j(Y)\cong {\rm Ext}_R^j(K_R,K_R)=0$ for all $j\neq 0$ and $H^0(Y)= {\rm Hom}_R(K_R,K_R)\cong R$. Also, if $\mathfrak{p}\in {\rm Spec}R$ we can see that
$$E^i_R(K_R)\cong \bigoplus_{{\rm height} \mathfrak{p}=i}E_R(R/\mathfrak{p}).$$

Since $R$ is Cohen Macaulay, $K_R$ have finite injective dimension. So, $Y^j=0$ for all $k>0$. By this we can conclude that $Y$ turns into a flat resolution of $R$. So, we can see that the cohomologies of the complex
$$\displaystyle \varprojlim_{\mathfrak{a}\in \widetilde{W}(I,J)}\displaystyle \varprojlim(R/\mathfrak{a}^r \otimes_R {\rm Hom}(E^\bullet_R(K_R),E^\bullet_R(K_R)))$$ are zero for all $i\neq 0$ and, since $R$ is complete, for $i=0$ is $R$. Furthermore $H^i_{I,J}(K_R)=0$ for all $i\neq c$. Thus the complex ${\rm Hom}_R(C^\bullet_{K_R}(I,J),E^\bullet_R(K_R))$ is an exact complex.

Apply the long exact sequence of cohomologies of the previous exact sequence, follows the isomorphism
$$R\rightarrow {\rm Ext}_R^t(H^t_{I,J}(K_R),K_R)$$ and, for all $i\neq t$
$${\rm Ext}_R^i(H^t_{I,J}(K_R),K_R)=0.$$

By Theorem \ref{theo2.3} and  Lemma \ref{lemma3.3} $(a)$, finishes the proof of statement $(a)$.
\end{proof}

\end{document}